\documentclass[11pt,reqno]{amsart}
\usepackage{amssymb,latexsym,amsmath,amsthm,mathrsfs,amsfonts,microtype}
\usepackage{fullpage}

\usepackage{float}
\usepackage{graphicx}
\usepackage{tikz}

\usetikzlibrary{shapes.geometric, angles, quotes}
\usetikzlibrary{calc}

\usepackage{xcolor}

%%vertical equal sign
\usepackage{mathtools}
\usepackage{hyperref}

%%
%--------------αριθμηση θεωρρηματων-----------------

\newtheorem{theorem}{Theorem}
\newtheorem{proposition}{Proposition}
\newtheorem{lemma}{Lemma}
\newtheorem{corollary}{Corollary}

\theoremstyle{definition}
\newtheorem{definition}{Definition}

\newtheorem{remark}[definition]{Remark}

\newtheorem*{remark*}{Remark}

\newtheorem*{example*}{Example}

\newtheorem*{terminology}{Terminology}

\newtheorem*{proof idea}{Idea of the proof}

\theoremstyle{remark}

\newtheorem*{claim*}{Claim}

\begin{document}
	
	\title{On uniform recurrence for hyperbolic automorphisms of the $2$-dimensional torus} 
	
	\author{Georgios Lamprinakis}
	\address{Lund University, Centre for Mathematical Sciences, \newline
	\hspace*{\parindent}Box 118, 221 00 Lund, Sweden}
	\email{georgios.lamprinakis@math.lth.se}

	\author{Tomas Persson}
	\address{Lund University, Centre for Mathematical Sciences, \newline
	\hspace*{\parindent}Box 118, 221 00 Lund, Sweden}
	\email{tomas.persson@math.lth.se}
	
	\author{Alejandro Rodriguez Sponheimer}
	\address{Lund University, Centre for Mathematical Sciences, \newline
	\hspace*{\parindent}Box 118, 221 00 Lund, Sweden}
	\email{alejandro.rodriguez\_sponheimer@math.lth.se}

	\date{}
	\subjclass[2010]{37D20, 37B20, 28A78}
	\keywords{Uniform recurrence, Hyperbolic dynamical system, Hausdorff dimension}

	\begin{abstract}
We are interested in studying sets of the form
\[
\mathcal{U}(\alpha) \ 
:= \ 
 \left\{ x\in X: \ \exists M=M(x) \geq 1 \text{ such that } \forall N\geq M, \ \exists n\leq N \text{ such that } d(T^nx, x) \leq |\lambda|^{-\alpha N} \right\}
\]
where $(X,T,d)$ is our metric dynamical system and $|\lambda|>1$.
Although a lot of results exist for the one dimensional case, not as
many are known for systems in higher dimensions and especially in the hyperbolic case. We consider $X=\mathbb{T}^2$,  $T(x) = Ax \pmod{1}$, where $A$ is a hyperbolic, area preserving, $2\times 2$ matrix with integer entries and $\lambda$ is the eigenvalue of $A$ of modulus larger than $1$ and we explicitly calculate the Hausdorff dimension of this set.
\end{abstract} 

\maketitle

\section{Introduction}
One of the most important results in dynamical systems is Poincar\'e's recurrence theorem proved by Carath\'eodory~\cite{Caratheodory}. Later on Boshernitzan~\cite{Boshernitzan} quantified this question by giving information about the speed of the asymptotic recurrence. This initiated a more systematic study of recurrence properties of a system, as well as its approximation properties and the (more general) shrinking target problems, dynamical Borel--Cantelli lemmas, return/hitting time etc.

Apart from the asymptotic recurrence, in view of Dirichlet's theorem, it is of interest to study the uniform approximation and recurrence properties of a dynamical system. The famous Dirichlet's theorem states that for any real number $\xi$ and for all integers $N\geq 1$, there exists an integer $1 \leq n \leq N$, such
\[
\| n\xi \| \leq N^{-1}
\]
where $\| \cdot \|$ denotes the distance to the nearest integer. This can be expressed in the language of dynamical systems by considering $T_{\xi} \colon \mathbb{T} \to \mathbb{T} \colon  x \mapsto x+\xi \pmod{1}$ and observing that $\| n\xi \| = \| T_{\xi}(x) - x \|$. That way Dirichlet's theorem be interpreted as a uniform recurrence problem. Kim and Liao \cite{Kim Liao} studied the inhomogeneous version of Dirichlet's theorem. In particular, they considered (among other things) the set
\[
\mathcal{U}_{\alpha}(\xi) := \left\lbrace y\in \mathbb{T} : \ \exists M=M(y) \text{ such that } \forall N\geq M, \ \exists 1\leq n \leq N \text{ such that } \|n\xi - y\| < N^{-\alpha} \right\rbrace
\]
and calculated the Hausdorff dimension of $\mathcal{U}_{\alpha}(\xi)$ (which depends on the irrationality of $\xi$). In \cite{Bugeaud Liao}, Bugeaud and Liao studied the corresponding set for a $\beta$-transformation and explicitly calculated the Hausdorff dimension of the set 
\[
\mathcal{U}(\alpha,y):= \left\{x\in \mathbb{T}: \ \exists M=M(x) \geq 1 \text{ such that } \forall N\geq M, \ \exists n\leq N \text{ such that } d(T_{\beta}^nx, y) \leq \beta^{-\alpha N} \right\}
\]
showing that it is $\frac{(1-\alpha)^2}{(1+\alpha)^2}$ for $\alpha \in [0,1]$ and zero otherwise. The same set was studied by Kirsebom, Kunde and Persson \cite{Maxim Philipp Tomas 2019}, even for more general maps, such as piecewise expanding maps and some quadratic maps. Specifically for $\beta$-transformations, they showed a jump between its Hausdorff and Packing dimension.  Zheng and Wu \cite{zheng2020Uniform} investigated the respective uniform recurrence set for a $\beta$-transformation
\[
\mathcal{U}(\alpha):= \left\{x\in \mathbb{T}: \ \exists M=M(x) \geq 1 \text{ such that } \forall N\geq M, \ \exists n\leq N \text{ such that } d(T_{\beta}^nx, x) \leq \beta^{-\alpha N} \right\}
\]
and they proved that the Hausdorff dimension is again equal to $\frac{(1-\alpha)^2}{(1+\alpha)^2}$. It is worth noting at this point, that there is (as expected) some dimensional connection between certain recurrence and approximation questions, as it is showcased in the above mentioned results.

While there are some results for higher dimensions, most of them are for the expanding case. For example He and Liao \cite{He Liao} gave a formula for the dimension for the asymptotic recurrence when $A$ is a diagonal matrix, not necessarily integer, and with all diagonal elements of modulus larger than $1$. Seemingly, very little is known for the hyperbolic case, with the only result of this type known to the authors being due to Hu and Persson \cite{Hu Tomas}.
Namely, the dimension of the set 
\[
\mathcal{L}(\alpha) := \left\{ x\in \mathbb{T}^2: \ d_{\mathbb{T}^2}(T^nx, x) \leq |\lambda|^{-\alpha n} \text{, for infinitely many } n \right\}
\] 
was studied, where $T(x) = Ax \pmod{1}$, for all $x\in \mathbb{T}^2$ and $\lambda$ is the eigenvalue of the matrix $A$ of modulus larger than $1$ and it was explicitly calculated for the case of an integer, hyperbolic, area preserving, $2 \times 2$ matrix in the following result.
\begin{theorem} \label{Theorem non-uniform case}
Let $A$ be a hyperbolic $2\times 2$ integer matrix with $\det A = \pm 1$ and let $\lambda\in \mathbb{R}$ be its eigenvalue so that $|\lambda| > 1$.
Then
\[
\dim_H \big(\mathcal{L}(\alpha) \big) = 
\begin{cases}
\frac{2}{\alpha + 1} \ , \quad &0 \leq \alpha \leq 1\\
\hfill \\
\frac{1}{\alpha} \ , \quad &\alpha \geq 1
\end{cases}
\]
\end{theorem}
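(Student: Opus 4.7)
The plan is to express $\mathcal{L}(\alpha)$ as the $\limsup$ of the resonant sets
$R_n := \{x \in \mathbb{T}^2 : d(T^n x, x) \leq |\lambda|^{-\alpha n}\}$,
give a precise geometric description of $R_n$ in terms of the eigenbasis of $A$, derive the upper bound by two natural choices of cover, and obtain the lower bound via a mass transference principle for rectangles (or, equivalently, an explicit Cantor construction using the hyperbolic structure of $T$).

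\smallskip

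\noindent\textbf{Geometry of $R_n$ and upper bound.} Let $v_u, v_s$ be unit eigenvectors of $A$ for $\lambda$ and $\lambda^{-1}$. Passing to $\mathbb{R}^2$, the recurrence condition at time $n$ becomes the existence of $m \in \mathbb{Z}^2$ with $\|(A^n - I)x - m\| \leq c|\lambda|^{-\alpha n}$. Since $A^n - I$ has eigenvalues $\lambda^n - 1$ and $\lambda^{-n} - 1$, the preimage of a ball of radius $|\lambda|^{-\alpha n}$ is a parallelogram aligned with $v_u, v_s$ of side-lengths $\asymp |\lambda|^{-(\alpha+1)n}$ (unstable) and $\asymp |\lambda|^{-\alpha n}$ (stable). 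Since $|\det(A^n - I)| \asymp |\lambda|^n$, there are $\asymp |\lambda|^n$ such parallelograms forming $R_n$, and they should be roughly equidistributed in $\mathbb{T}^2$. For the upper bound, each parallelogram can be covered either by $\asymp |\lambda|^n$ balls of radius $|\lambda|^{-(\alpha+1)n}$, whose total $s$-cost is $\asymp |\lambda|^{2n - (\alpha+1)ns}$ and vanishes when $s > 2/(\alpha+1)$; or by a single ball of radius $|\lambda|^{-\alpha n}$, whose total $s$-cost is $\asymp |\lambda|^{n(1 - \alpha s)}$ and vanishes when $s > 1/\alpha$. Taking the smaller of these at each $\alpha$ gives exactly the claimed bounds, with the crossover at $\alpha = 1$.

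\smallskip

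\noindent\textbf{Lower bound.} This is the main obstacle, since the resonant set is a union of very anisotropic rectangles rather than balls. For $\alpha \in [0,1]$ the natural route is a mass transference principle for rectangles (in the spirit of Wang--Wu or Allen--Baker): one thickens the thin rectangles of $R_n$ to balls of radius equal to their long side and shows, using the arithmetic structure of $A$ and the equidistribution of the shifts $m \in \mathbb{Z}^2$, that the resulting $\limsup$ of balls has full Lebesgue measure in $\mathbb{T}^2$; the MTP for rectangles then transfers this to $\dim_H \mathcal{L}(\alpha) \geq 2/(\alpha+1)$. For $\alpha \geq 1$ the rectangles are too elongated for the full 2-dimensional mechanism to apply, and one instead constructs a Cantor subset of $\mathcal{L}(\alpha)$ by nesting, along a rapidly growing sequence $n_1 \ll n_2 \ll \cdots$, one chosen rectangle of $R_{n_k}$ inside the previous one; the natural self-similar measure on this Cantor set, together with the mass distribution principle, yields $\dim_H \mathcal{L}(\alpha) \geq 1/\alpha$.

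\smallskip

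The principal technical difficulty is verifying the hypotheses of the mass transference principle (or, for the Cantor construction, controlling the number and spacing of sub-rectangles at each step). Both reduce to the same quantitative input: a regular distribution of the integer vectors $m$ producing resonant rectangles at scale $n$, which must follow from the integrality of $A$ and the condition $\det A = \pm 1$. The delicate part is getting sharp constants, so that the separation between rectangles is large enough (compared to their short side) to allow a disjointification argument at the chosen scale.
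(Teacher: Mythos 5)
First, a point of comparison: the paper does not prove Theorem~\ref{Theorem non-uniform case} at all; it is quoted as background from Hu and Persson \cite{Hu Tomas}, so there is no internal proof to measure your argument against, and I can only assess it on its own terms.

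Your reduction and upper bound are correct and are the standard route: lifting to $\mathbb{R}^2$, the resonant set $R_n$ is a union of $\asymp|\lambda|^{n}$ parallelograms of size $\asymp|\lambda|^{-(1+\alpha)n}\times|\lambda|^{-\alpha n}$ aligned with the eigendirections, and your two covers (balls of radius the short side, respectively one ball of radius the long side per parallelogram) give, via the Hausdorff--Cantelli argument, the exponents $2/(\alpha+1)$ and $1/\alpha$ with crossover at $\alpha=1$.

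The lower bound, however, contains a genuine gap as formulated. For $\alpha\in(1/2,1]$ your full-measure input is false: thickening the $\asymp|\lambda|^{n}$ rectangles of $R_n$ to balls of radius their long side $|\lambda|^{-\alpha n}$ produces a union of Lebesgue measure $\asymp|\lambda|^{(1-2\alpha)n}$, which is summable in $n$ when $\alpha>1/2$, so by the convergence Borel--Cantelli lemma the limsup of those balls is Lebesgue-null, not of full measure. The rectangle mass transference route can be repaired, but the ambient full-measure family must be taken at the $\alpha=0$ scale, i.e.\ rectangles centred at the resonant points $(A^n-I)^{-1}m$ with unstable side $\asymp|\lambda|^{-n}$ and stable side of constant order, which do cover $\mathbb{T}^2$ with bounded multiplicity for every $n$; one then transfers from side exponents $(1,0)$ to $(1+\alpha,\alpha)$, and verifying the hypotheses and that the formula actually returns $2/(\alpha+1)$ is the real work. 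Secondly, for $\alpha\ge1$ the construction you describe, nesting one chosen rectangle of $R_{n_{k+1}}$ inside the previously chosen one, yields a single point and hence dimension zero, and the measure you call self-similar does not exist for such a scheme. What is needed is to retain, inside each parent, the $\asymp|\lambda|^{\,n_{k+1}-(1+2\alpha)n_k}$ level-$n_{k+1}$ rectangles meeting it, to prove that the level-$n_{k+1}$ resonant points (the lattice $(A^{n_{k+1}}-I)^{-1}\mathbb{Z}^2$) are sufficiently well distributed even at the scale of the parent's short side $|\lambda|^{-(1+\alpha)n_k}$ (this is where integrality, $\det A=\pm1$ and the rapid growth of $n_k$ must enter), and then to run the mass distribution principle over all intermediate ball radii between the short and the long sides of the anisotropic pieces, which is exactly where $1/\alpha$ rather than $2/(\alpha+1)$ emerges. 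Your closing paragraph gestures at these points, but as the proposal stands neither the full-measure claim nor the Cantor construction is correct as written.
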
 
Note that a phase transition occurs here, related to the different choice of optimal covers, depending on $\alpha$. This phenomenon, of course, does not appear in the one dimensional case where the geometry is much simpler.

Motivated by the above results we study, in analogy, the uniform recurrence properties of a certain family of hyperbolic automorphisms in $\mathbb{T}^2$.
We consider the set 
\[
\mathcal{U}(\alpha) := \left\{x\in \mathbb{T}^2: \ \exists M=M(x) \geq 1 \text{ such that } \forall N\geq M, \ \exists n\leq N  \text{ such that } d_{\mathbb{T}^2}(T^nx, x) \leq |\lambda|^{-\alpha N} \right\}
\]
where $T$ and $\lambda$ are as before.
Our focus is, again, to study the Hausdorff dimension of that set which yields the following result.
\begin{theorem} \label{Theorem main result - isomorphism}
	Let $A$ be a hyperbolic $2\times 2$ integer matrix with $\det A = \pm 1$ and let $\lambda\in \mathbb{R}$ be its eigenvalue so that $|\lambda| > 1$.
	Then
	\[\dim_H \big(\mathcal{U}(\alpha) \big) = 
	\begin{cases}
		2 \frac{(1-\alpha)^2}{(1+\alpha)^2} \ , \quad &0\leq \ \alpha \ \leq 3-2\sqrt{2}\\[12pt]
		
		\frac{(1-\sqrt{2\alpha})^2}{\alpha} \ , \quad &3-2\sqrt{2} \leq \ \alpha \ \leq 2-\sqrt{3} \\[10pt]
		
		\frac{1-3\alpha}{1-\alpha} \ , \quad  &2-\sqrt{3} \leq \ \alpha \ \leq 1/3 \\[10pt]
		
		0 \ , \quad &\alpha \geq 1/3 
	\end{cases}
	\]
	Furthermore, the cardinality of\/ $\mathcal{U}\left(\frac{1}{3}\right)$ is the continuum.
\end{theorem}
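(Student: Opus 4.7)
The plan is to exploit the hyperbolic structure of $A$ through a Markov partition for $T$, transferring the problem to symbolic dynamics, and then to combine the one-dimensional approach of Zheng--Wu in the unstable and stable coordinates with a multi-scale covering argument. Near a fixed point $p$ of $T^n$, hyperbolicity forces any $x$ with $d(T^nx, x) \leq |\lambda|^{-\alpha N}$ to lie in a rectangle of unstable width $\asymp |\lambda|^{-\alpha N - n}$ and stable width $\asymp |\lambda|^{-\alpha N}$, and the number of such rectangles from period-$n$ orbits is $\asymp |\lambda|^n$. This is exactly the geometric ingredient used in \cite{Hu Tomas} to handle the non-uniform set $\mathcal{L}(\alpha)$, but as we shall see it is not enough on its own.

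The key reformulation for the upper bound is that $x \in \mathcal{U}(\alpha)$ if and only if there is an increasing sequence of \emph{record return times} $n_1 < n_2 < \cdots$ with $d(T^{n_k}x, x) \leq |\lambda|^{-\alpha n_{k+1}}$ for every $k$. A single-scale cover only reproduces the weaker non-uniform bound $2/(1+\alpha)$ from Theorem \ref{Theorem non-uniform case}, so the uniformity must be propagated through scales via this nested structure. I would estimate the $s$-dimensional Hausdorff content by summing over all admissible record sequences, and for each rectangle choose a covering radius out of three natural candidates: the stable width $|\lambda|^{-\alpha n_{k+1}}$ (optimal when $s \leq 1$), the unstable width $|\lambda|^{-\alpha n_{k+1} - n_k}$ (optimal when $s$ is close to $2$), and an intermediate radius (which wins in a genuine middle range). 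Optimizing the resulting exponent in the variable $t_k := n_{k+1}/n_k$ produces three piecewise smooth branches that meet at $\alpha = 3 - 2\sqrt{2}$ and $\alpha = 2 - \sqrt{3}$, yielding the advertised formula.

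For the lower bound, I would carry out a Cantor-type construction in the symbolic space attached to the Markov partition. At stage $k$ one freely chooses a word over a window around the origin of length comparable to $\alpha n_{k+1}$ and then imposes the symbolic translation of the recurrence condition by copying it to the shifted window at distance $n_k$, so that the corresponding Markov rectangles are automatically contained in the parent at stage $k-1$. The growth rate of $n_k$ is chosen depending on the regime of $\alpha$ to maximize dimension, and the mass distribution principle applied to the natural measure (uniformly spreading mass over the free symbols) delivers the matching lower bound. The cardinality statement at $\alpha = 1/3$ then follows by performing this construction with a single binary choice per stage, which produces a Cantor set of continuum cardinality inside $\mathcal{U}(1/3)$ even though its dimension is $0$.

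The main obstacle I anticipate is the middle regime $3 - 2\sqrt{2} \leq \alpha \leq 2 - \sqrt{3}$. There neither the stable nor the unstable width is the right covering radius and one must balance the two scales precisely; correspondingly the lower-bound Cantor set must distribute its symbolic freedom between past and future coordinates so that the recurrence condition holds uniformly for \emph{every} $N \geq M$ and not merely along the selected record scales. A secondary technical issue is that the rectangles at different stages must nest correctly despite the Markov admissibility constraints, which can be handled by replacing $T$ with a suitable power whose Markov partition is fine enough that the copying operation above stays inside the symbolic model.
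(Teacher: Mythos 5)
Your overall architecture does line up with the paper's: transfer to the two\-/sided subshift of a Markov partition, reformulate membership in $\mathcal{U}(\alpha)$ via record return times $n_1<n_2<\cdots$ with $d(T^{n_k}x,x)\le\lambda^{-\alpha n_{k+1}}$, prove the upper bound by covering according to these records and optimizing over the growth ratio $\theta=\limsup n_{k+1}/n_k$, prove the lower bound by a Cantor/mass\-/distribution construction along $n_k=\theta^k$, and get the continuum statement at $\alpha=1/3$ from a bounded\-/freedom construction; all of this is what the paper does, including the reduction $\dim_H\mathcal{U}(\alpha)=\dim_H\mathcal{U}'(\alpha)$ that you sketch via Lipschitz coding and comparability of cylinders and balls.

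The genuine gap is in the upper bound for $\alpha>3-2\sqrt{2}$, i.e.\ exactly where the new branches appear. The mechanism that pushes the dimension below $1$ there is not a third, ``intermediate'' covering radius: it is that when the fixed blocks at consecutive records $n_k$ and $n_{k+1}$ overlap, the two recurrence conditions force a block of \emph{past} digits (a constraint in the stable direction, of length comparable to the overlap) to be fixed, and it is precisely this count of induced ``fixed blocks on the left'' that cuts down the number of rectangles compatible with a given record sequence. If you cover at the stable width $\lambda^{-\alpha n_{k+1}}$ but count rectangles only through the future/unstable constraints, as in your single\-/scale periodic\-/orbit picture, then a ball of that radius still contains a window of length $\approx\alpha n_{k+1}$ of unconstrained past digits, so the resulting exponent never drops below $1$ and you cannot reach $(1-\sqrt{2\alpha})^2/\alpha$ or $(1-3\alpha)/(1-\alpha)$. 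Moreover, in the paper these two branches come from the \emph{same} cover (at the scale of the left fixed block, i.e.\ the stable width); the transition at $2-\sqrt{3}$ is caused by the constraint $\theta\le 1/\alpha$ becoming active in the optimization, and that constraint must itself be justified by a separate argument (only countably many $x$ have $\theta>1/\alpha$, and likewise $\theta<(1-2\alpha)^{-1}$), which your plan does not supply. To make your proposal work you would need to (i) carry out the explicit bookkeeping of the overlap\-/induced stable constraints when counting rectangles per admissible record sequence, (ii) restrict $\theta$ to $[(1-2\alpha)^{-1},\alpha^{-1}]$ by the countability argument, and (iii) check that the number of record configurations is subexponential so it does not affect the critical exponent; the same past\-/digit bookkeeping (disjointness of the left fixed blocks, forcing $\theta>1/(1-2\alpha)$) is also what produces the stated values in your lower\-/bound construction, which otherwise is the paper's argument in outline.
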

Below is an illustration of the dimension of
$\mathcal{U}(\alpha)$. Observe that, as anticipated from the
preceding result in Theorem~\ref{Theorem non-uniform case}, a
phase transition manifests when the dimension drops to
one. Furthermore, even though it may not be immediately clear
from our approach, the phase transition here is also related to
the different optimal cover depending on $\alpha$. Interestingly,
the dimension of $\mathcal{U}(\alpha)$ as a function of $\alpha$
is differentiable everywhere except where the phase transition
appears, i.e.\ for $\alpha = 3-2\sqrt{2}$; however, the second
derivative does not exist for $\alpha =2-\sqrt{3}$.
\begin{center}
	\includegraphics[width=80mm, height=55mm]{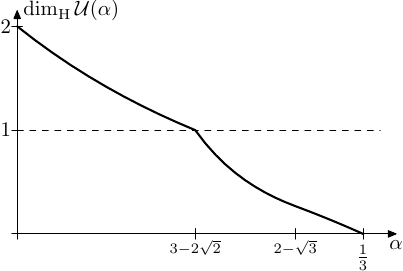}
\end{center}

While the centre of focus and the statement of the results are mostly related to the work in \cite{Hu Tomas},  the proofs are closer in spirit to the preceding pieces of work and mostly with \cite{Bugeaud Liao}. In particular, our strategy revolves around showcasing---as in the $1$-dimensional case---that in an appropriate shift space $\Sigma_{\Gamma}$ (see Section~\ref{Subsection Hyperbolic automorphisms and coding}), the respective set  
 \[
 \mathcal{U}'(\alpha) = \{\underline{x}\in \Sigma_{\Gamma}: \ \exists M=M(\underline{x}) \geq 1 \text{ such that } \forall N\geq M, \ \exists n\leq N \text{ such that } d_{\Sigma}(\sigma^n\underline{x}, \underline{x}) \leq |\lambda|^{-\alpha N}\}
 \]
has the same dimension as the original set. For the maps $\times m \pmod{1}, m \in \{2, 3, 4, \ldots\}$ and $\beta$-transformations, there is a natural connection with the appropriate shift spaces via the $m$-expansion or the $\beta$-expansion of numbers in the interval $[0, 1)$ which essentially allows us to calculate the dimension symbolically. We show that this, even though not as simple as in the one-dimensional case, is also true for the hyperbolic maps under study. This allows us to bypass the more complicated geometry inherited by the higher dimensional setting and more easily handle the complex structure of $\mathcal{U}(\alpha)$.

Although we only consider the $2$-dimensional torus, the methods could
possibly be applied to higher dimensional tori; however, some new
problems arise because of non-conformal stable or unstable directions.

Sections~\ref{Section lower-isomorphism} and \ref{Section upper-isomorphism} are devoted to calculating the Hausdorff dimension of the uniform recurrence set $\mathcal{U}'(\alpha)$. Section~\ref{Section  dim manifold = dim coding space - isomorphism} is devoted to justifying our claim that
 \[
 \dim_H \big(\mathcal{U}(\alpha) \big) = \dim_H \big(\mathcal{U}'(\alpha) \big).
 \]
 
 Without loss of generality we can even assume that $\det A = 1$ and the eigenvalues are both positive. This means that if $0 < \lambda_2 < 1 < \lambda_1$ are the eigenvalues of the hyperbolic matrix $A$, then $\lambda_1 = \lambda >1$ and $\lambda_2 = \lambda^{-1}$. This is sufficient, since the other cases can be treated in the exact same manner as this one and yield, of course, the same dimensional results. Thus, for notational convenience, we are assuming form here on that $\lambda$ is positive and in particular, $\lambda > 1$.

\section{Preliminaries}
\subsection{Shift space} \label{Subsection Shift spaces}
Let $\Sigma_d$, $d\in \mathbb{N}$, $d\geq 2$, denote the full, two-sided shift space corresponding to the alphabet $\mathcal{A} = \{ 0,1, \ldots , d-1 \}$, i.e.\ $\Sigma_d := \mathcal{A}^{\mathbb{Z}}$. We denote the elements of $\Sigma_d$ as 
\[
\underline{x} =(\ldots x_{-n} \ldots x_{-1}x_0x_1 \ldots x_n \ldots) , \quad x_i\in \mathcal{A}, \ i\in\mathbb{Z}
\]
and we call the $x_i$ the $i$-th \textit{letter} or $i$-th \textit{digit} of $\underline{x}$ where $i\in \mathbb{N}$ denotes the position of each digit.  We call \textit{block} or \textit{word} a finite string of letters chosen from the alphabet $\mathcal{A}$ which we denote as 
\[
[a_{m}, \ldots , a_n]
\]
$m,n \in \mathbb{Z}$, $m\leq n$.
If we are mostly interested in the placement of such a block, we just write
\[
[m, \ldots , n]
\]
$m,n \in \mathbb{Z}$, $m\leq n$.
Let $m,n, k \in \mathbb{Z}$, with $m\leq n$ and let $[a_{m}, \ldots , a_n]$ and $[b_{m+k}, \ldots , b_{n+k}]$ be two blocks. We say that the two blocks are equal if 
\[
a_i=b_{i+k} , \quad \text{ for all } i=m,\ldots ,n.
\]
We call the set 
\[C_{[a_{m}, \ldots , a_n]} = \{x\in \Sigma : \ x_i=a_i \text{ for all }  m \leq i\leq n \}
\]
a \textit{cylinder}, where $m,n \in \mathbb{Z}$, $m\leq n$ and $a_i \in \{ 0,1, \ldots , d-1 \}$ for $m \leq i\leq n$.
For simplicity, if it does not cause any confusion, we write $\Sigma$ instead of $\Sigma_d$.

The topology on $\Sigma$ is the product topology and it is a compact, metrizable topological space. 
A compatible metric\footnote{The reason for the choice of this specific metric will become clear in Section~\ref{Section  dim manifold = dim coding space - isomorphism}.} is 
\[
d_{\Sigma}(\underline{x}, \underline{y}) = \lambda^{-k(\underline{x}, \underline{y})}
\]
where $\lambda$ denotes the largest eigenvalue of the matrix $A$ 
and $k(\underline{x}, \underline{y}) = \max \{n\in \mathbb{N}: \ x_{i}=y_{i}, \text{ for all } |i|\leq n  \}$. Let $\sigma$ be the regular shift operator on $\Sigma$, such that $\big(\sigma(\underline{x})\big)_i = x_{i+1}$, for all $i\in \mathbb{Z}$. The shift operator acts continuously on $\Sigma$.  

Let $S$ be a closed subset of $\Sigma$ that is invariant under $\sigma$. Any invariant subset of $\Sigma$ is determined by a countable collection of forbidden words. A subset $S$ is a subshift of finite type if there exists a finite list of forbidden words/blocks $\mathcal{F}$ such that a point $x \in \Sigma$ is in $S$ if and only if $x$ contains no blocks from $\mathcal{F}$. Of course the whole shift space is a subshift of finite type. A forbidden block $w=[w_{m}, \ldots ,  w_{n}]$, $m,n \in \mathbb{Z}$, $m\leq n$, can also be described by a finite collection of larger forbidden blocks, for example,
\[
\{ [w_{m}, \ldots ,  w_{n}0], [w_{m},  \ldots , w_{n}1], \ldots , [w_{m},  \ldots , w_{n}(d-1)] \}.
\]
Thus we can assume if needed, that all the forbidden words are of the same length, equal to that of the longest of the initial forbidden blocks.	
Any subshift of finite type can also be represented by a $d^{\ell-1}\times d^{\ell-1}$ matrix, $\Gamma=(\gamma_{ij})$, with entries in $\{0,1\}$, where $\ell$ is the length of the longest forbidden word and $\gamma_{ij} = 1$ when it corresponds to an allowed block and $\gamma_{ij}=0$ otherwise. The matrix $\Gamma$ is called transition matrix. Therefore, and since subshifts of finite type are playing a major role (see Section~\ref{Subsection Hyperbolic automorphisms and coding}), we denote a subshift of finite type with transition matrix $\Gamma$ as $\Sigma_{\Gamma}$.

Let $\Sigma_{\Gamma}$ be a subshift of finite type with transition matrix $\Gamma$.  We define its topological entropy by
\[
h_{\text{top}}(\Sigma_{\Gamma})= \lim_{n \to \infty} \frac{\log (\# \mathcal{A}_n)}{n}
\]
where $\mathcal{A}_n$ denotes the admissible words of length $n$ for the subshift of finite type $\Sigma_{\Gamma}$ and $\#X$ denotes the cardinality of a set $X$. The next very well known result can be found for example in \cite[Appendix II]{Pesin}.
\begin{proposition} \label{Poposition entropy of SFT}
	Let $\Sigma_{\Gamma}$ be a subshift of finite type with transition matrix $\Gamma$. Let $\lambda$ be the largest, in absolute value, eigenvalue of\/ $\Gamma$. Then,
	\[
	h_{\text{top}}(\Sigma_{\Gamma}) = \log \lambda.
	\]
\end{proposition}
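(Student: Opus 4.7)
The plan is to reduce counting admissible words to spectral analysis of the matrix $\Gamma$, and then invoke Gelfand's spectral radius formula. First, using the recoding procedure mentioned in the text just above, I would assume without loss of generality that every forbidden word has length $2$. Under this normalisation, $\Gamma$ is indexed by the alphabet itself and a word $a_1 \cdots a_n$ is admissible precisely when $\Gamma_{a_i a_{i+1}} = 1$ for all $i$. A straightforward induction on $n$ then identifies the count of admissible words with a matrix-power expression:
\[
\#\mathcal{A}_n \;=\; \sum_{i,j} \bigl(\Gamma^{n-1}\bigr)_{ij} \;=\; \mathbf{1}^{T}\, \Gamma^{n-1}\, \mathbf{1},
\]
where $\mathbf{1}$ denotes the column vector of all ones in $\mathbb{R}^{d}$ and $d$ is the size of $\Gamma$.

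Second, I would trap $\#\mathcal{A}_n$ between two multiples of an operator norm of $\Gamma^{n-1}$. Since $\Gamma$, and therefore every power $\Gamma^{n-1}$, has non-negative entries, the sum of all its entries lies between the maximal row sum $\|\Gamma^{n-1}\|_{\infty}$ and $d \cdot \|\Gamma^{n-1}\|_{\infty}$. Taking $n$-th roots annihilates the multiplicative constants, yielding
\[
\lim_{n \to \infty} \bigl(\#\mathcal{A}_n\bigr)^{1/n} \;=\; \lim_{n \to \infty} \bigl\|\Gamma^{n-1}\bigr\|_{\infty}^{1/n},
\]
provided the right-hand limit exists.

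Third, the right-hand limit is precisely Gelfand's spectral radius formula, which gives $\lim_n \|\Gamma^n\|^{1/n} = \rho(\Gamma)$ for any choice of matrix norm. The non-negativity of $\Gamma$ together with the Perron--Frobenius theorem guarantee that this spectral radius is itself a non-negative real eigenvalue, hence coincides with the largest-in-absolute-value eigenvalue $\lambda$, so $\log \lambda$ is well-defined and equals $\log \rho(\Gamma)$. Combining the previous steps gives
\[
h_{\text{top}}(\Sigma_{\Gamma}) \;=\; \lim_{n \to \infty} \frac{\log \#\mathcal{A}_n}{n} \;=\; \log \lambda,
\]
which is the desired identity. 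The main obstacle, such as it is in a classical statement of this type, is purely bookkeeping: one has to verify that the recoding to length-$2$ forbidden words preserves the spectral radius of $\Gamma$ (which it does, since the recoded matrix is conjugate up to a zero-padding to the original on the relevant invariant subspace) and be careful that Gelfand's formula applies even when $\Gamma$ is reducible, which it does without further assumptions.
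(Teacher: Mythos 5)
Your argument is correct and is the standard textbook proof (word count as $\mathbf{1}^{T}\Gamma^{n-1}\mathbf{1}$, comparison with a matrix norm, Gelfand's formula plus Perron--Frobenius); note that in the paper's setup $\Gamma$ is already the $(\ell-1)$-block transition matrix, so your recoding caveat reduces to the easy observation that the word counts of the subshift and of its higher-block presentation differ only by a bounded index shift. The paper itself gives no proof but cites Pesin's Appendix II, which contains essentially this same argument, so your proposal matches the intended proof.
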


\subsection{Hyperbolic automorphisms and coding} \label{Subsection Hyperbolic automorphisms and coding}
Let $A$ be a hyperbolic, area preserving matrix with integer entries and $T: \mathbb{T}^2 \to \mathbb{T}^2$ be defined by $T(x)= Ax \pmod{1}$, $\forall x\in \mathbb{T}^2$. 
A Markov partition for the system $(\mathbb{T}^2 , T)$ is a finite cover $\mathcal{P} = \{P_0 , \ldots , P_{d-1} \}$ of $\mathbb{T}^2$ such that,
	\begin{enumerate}
		\item each $P_i$ is the closure of its interior, int$P_i$  and $P_i$ is convex
		\item int$P_i \cap \text{int} P_j = \emptyset$
		\item whenever  $x\in \text{int}P_i$ and $T(x)\in \text{int}P_j$, then $W_{P_j}^u \big(T(x)\big) \subset T\big(W_{P_i}^u(x)\big)$
		and $T\big(W_{P_i}^s(x)\big) \subset W_{P_j}^s \big(T(x)\big)$.
	\end{enumerate} 
where, $W_{P_i}^u(y)$ denotes the intersection of the unstable manifold of $y$ with the element $P_i$ and $W_{P_i}^s(y)$ is defined analogously. For such a system, the elements of the partition can be constructed so that they are parallelograms with sides parallel to the eigendirections. 
Let $\mathcal{P} = \{P_0 , \ldots , P_{d-1} \}$ be a Markov partition of the system $(\mathbb{T}^2 , T)$. Then $(\mathbb{T}^2 , T)$ can be represented symbolically by a subshift of finite type, $\Sigma_{\Gamma}$ in $\Sigma_{d}$ corresponding to the transition matrix $\Gamma=(a_{ij})$, $i,j \in \{ 0, 1, \ldots , d-1 \}$ where 
	\[
	a_{ij} = \begin{cases}
		1, \quad &\text{int}P_i \cap T^{-1}( \text{int}P_j) \neq \emptyset\\
		0, &\text{otherwise}
	\end{cases}
	\]
This gives rise to the coding map
\[
\pi \colon \ \Sigma_{\Gamma} \to \mathbb{T}^2  \colon \underline{x} \mapsto \bigcap_{j\in \mathbb{Z}} T^{j}(P_{x_j}) \ , \quad \ \text{ for } \underline{x}=(\ldots x_{-n} \ldots x_{-1}x_0x_1 \ldots x_n \ldots)
\]
so that $\pi \circ \sigma = T \circ \pi$.

Adler \cite{Adler} proved that for any such hyperbolic system, we can find a Markov partition so that the spectral radius of the transition matrix is equal to that of the original matrix $A$. For our purpose the results in \cite{Snavely} are also sufficient. In particular we will use the following result.
\begin{theorem}[Snavely] \label{Theorem Snavely}
Let $A$ be a hyperbolic $2\times 2$ integer matrix acting on $\mathbb{T}^2$ and let $\lambda$ be its largest, in absolute value, eigenvalue. Then there is always a Markov partition for which the transition matrix $\Gamma$ has the same largest, in absolute value, eigenvalue.
\end{theorem}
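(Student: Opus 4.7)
The strategy combines three ingredients: (i) the existence of a Markov partition whose rectangles have sides parallel to the eigendirections of $A$, (ii) preservation of topological entropy under the associated coding, and (iii) Proposition~\ref{Poposition entropy of SFT}, which identifies the entropy of a subshift of finite type with the logarithm of the spectral radius of its transition matrix.

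For (i), I would invoke the classical Adler--Weiss construction of Markov partitions for hyperbolic toral automorphisms, specialized to the $2$-dimensional setting. Since $\lambda$ is an algebraic irrational, the stable and unstable eigendirections have irrational slope, so one can tile $\mathbb{T}^2$ with finitely many parallelograms whose sides are parallel to these directions, arranged so that $T$ maps unstable sides across whole partition elements and $T^{-1}$ does the same for the stable sides. In the $2 \times 2$ case these partitions can be made very explicit with only a handful of pieces, which is essentially the content of Snavely's geometric construction. For (ii), the coding map $\pi \colon \Sigma_\Gamma \to \mathbb{T}^2$ is continuous, surjective, and satisfies $\pi \circ \sigma = T \circ \pi$; hence $(\Sigma_\Gamma, \sigma)$ is an extension of $(\mathbb{T}^2, T)$ and $h_{\mathrm{top}}(\Sigma_\Gamma) \geq h_{\mathrm{top}}(T)$. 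The reverse inequality follows because $\pi$ is uniformly boundedly-many-to-one: points with more than one preimage must lie on the $T$-orbit of the finite union $\bigcup_i \partial P_i$, and on that orbit the fibre size is uniformly bounded; a standard entropy estimate then gives equality.

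To conclude, one invokes the classical fact that $h_{\mathrm{top}}(T) = \log \lambda$ for a hyperbolic toral automorphism---derivable from the variational principle together with the computation of the Kolmogorov--Sinai entropy of Haar measure via the positive Lyapunov exponent (or via the Ruelle--Pesin entropy formula)---and then applies Proposition~\ref{Poposition entropy of SFT} to conclude that the largest-in-modulus eigenvalue of $\Gamma$ equals $e^{h_{\mathrm{top}}(\Sigma_\Gamma)} = e^{\log \lambda} = \lambda$. The main obstacle is the geometric step (i): while the guiding principle (tile by rectangles aligned with the eigendirections so that the Markov property holds) is straightforward, turning it into an honest finite cover of $\mathbb{T}^2$ requires care, since the eigendirections have irrational slope and the rectangles must interlock across the identifications defining the torus. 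Steps (ii) and (iii) are essentially formal given the framework already set up in Section~\ref{Subsection Shift spaces}.
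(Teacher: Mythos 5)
The paper does not prove this statement: it is quoted verbatim as Snavely's theorem (with Adler's survey cited as an alternative source), so there is no internal proof to compare yours against. Judged on its own, your outline is a correct and standard route, but it is genuinely different from Snavely's. Snavely's argument is an explicit two-dimensional construction: he builds concrete Markov partitions out of parallelograms aligned with the eigendirections and computes their transition matrices directly, so that the agreement of the largest eigenvalue with $|\lambda|$ is read off algebraically from the construction. Your route instead gets the eigenvalue statement indirectly through topological entropy: existence of an Adler--Weiss partition, entropy of the factor map, the classical fact $h_{\mathrm{top}}(T)=\log|\lambda|$, and then Proposition~\ref{Poposition entropy of SFT}. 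What the entropy route buys is that you never need to know the transition matrix explicitly; what it costs is that the whole weight of the proof shifts onto step (ii), the claim that $\pi$ is uniformly boundedly finite-to-one. That claim is true but is precisely the nontrivial point: the set of points with non-unique coding is the full $T$-orbit $\bigcup_{n\in\mathbb{Z}}T^{n}\bigl(\bigcup_i \partial P_i\bigr)$ (a countable union of segments, not a finite set as your phrasing suggests), and the uniform bound on fibre cardinality requires an argument or a citation (it is in Adler's survey, the reference \cite{Adler} of this paper). With that point either proved or properly cited, and with the Adler--Weiss existence statement likewise cited rather than reconstructed, your argument closes correctly: $e^{h_{\mathrm{top}}(\Sigma_\Gamma)}=e^{h_{\mathrm{top}}(T)}=|\lambda|$ forces the spectral radius of $\Gamma$ to equal $|\lambda|$.
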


\begin{corollary} \label{Corollary admissible blocks are almost lambda to the n}
Consider the system $T\colon x\mapsto Ax \pmod{1} \colon \mathbb{T}^2 \to \mathbb{T}^2$. Let $\Sigma_{\Gamma}$ be the  corresponding coding space and $\lambda$ be the eigenvalue of $A$, of modulus larger than $1$. Then, the entropy of\/ $\Sigma_{\Gamma}$ is equal to $\log \lambda$.
\end{corollary}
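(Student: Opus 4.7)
The plan is to deduce the corollary directly by chaining together the two results that appear immediately above it in the preliminaries, namely Snavely's theorem and Proposition~\ref{Poposition entropy of SFT}. No genuine new work is needed; the only thing to verify is that the eigenvalue referred to in the statement of the corollary is indeed the same quantity that appears in both of the cited results.

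First I would invoke Theorem~\ref{Theorem Snavely} to choose, for the hyperbolic toral automorphism $T(x)=Ax \pmod 1$, a Markov partition $\mathcal{P}=\{P_0,\dots,P_{d-1}\}$ whose transition matrix $\Gamma$ has the same largest-in-absolute-value eigenvalue as $A$. Under the standing assumptions (following the introduction) that $\det A=1$ and that the eigenvalues of $A$ are positive, the largest-in-absolute-value eigenvalue of $A$ is precisely the $\lambda>1$ appearing in the statement of the corollary. Hence the spectral radius of $\Gamma$ equals $\lambda$.

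Next I would apply Proposition~\ref{Poposition entropy of SFT} to the subshift of finite type $\Sigma_\Gamma$ determined by this transition matrix. Since the largest-in-absolute-value eigenvalue of $\Gamma$ is $\lambda$, the proposition yields
\[
h_{\text{top}}(\Sigma_\Gamma) \;=\; \log \lambda,
\]
which is exactly the content of the corollary.

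The main (and essentially only) point that requires attention is the identification of the eigenvalues: one must check that the ``largest in absolute value'' eigenvalue coming out of Snavely's theorem is the $\lambda$ named in the corollary, and that Proposition~\ref{Poposition entropy of SFT} applies to the particular subshift of finite type arising from the Markov partition (which is a subshift of finite type by the construction described in Section~\ref{Subsection Hyperbolic automorphisms and coding}). Both checks are immediate, so I do not expect any real obstacle; the corollary is a one-step consequence of the two preceding results.
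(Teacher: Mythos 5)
Your proposal is correct and matches the paper's intended argument exactly: the corollary is stated without a separate proof precisely because it is the immediate combination of Theorem~\ref{Theorem Snavely} (a Markov partition whose transition matrix has spectral radius $\lambda$) with Proposition~\ref{Poposition entropy of SFT}. Your identification of the eigenvalues under the standing conventions is the only point needing care, and you handle it as the paper implicitly does.
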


\section{Lower bound} \label{Section lower-isomorphism}
In this section we are going to prove that
\begin{equation} \tag{I}  \label{Equation I: lower bound}
\dim_{H} \big( \mathcal{U}'(\alpha) \big) \ 
\geq \  
\begin{cases}
2 \frac{(1-\alpha)^2}{(1+\alpha)^2} \ , \quad &0\leq \ \alpha \ \leq 3-2\sqrt{2}\\[12pt]

\frac{(1-\sqrt{2\alpha})^2}{\alpha} \ , \quad &3-2\sqrt{2} \leq \ \alpha \ \leq 2-\sqrt{3} \\[10pt]

\frac{1-3\alpha}{1-\alpha} \ , \quad  &2-\sqrt{3} \leq \ \alpha \ \leq 1/3 \\[10pt]

0 \ , \quad &\alpha \geq 1/3
\end{cases}
\end{equation}

We will investigate the lower bound by choosing a sequence of natural numbers which will, essentially, be denoting the placement and the length (i.e.\ the number of consecutive digits each one of the fixed block occupies) of the fixed block that are created by the recurrence condition of $\mathcal{U}(\alpha)$. This placement, will create, for each case, sufficient freedom, i.e.\ sufficiently many positions (in between the fixed blocks of course) in which we can freely (apart from original restrictions of the subshift of finite type $\Sigma_{\Gamma}$) choose the digits,  to have---at least---positive dimension. We also call these free positions, simply free digits. In fact, by choosing an appropriate sequence, this method will gives us the optimal lower bounds for the dimension. 

\begin{remark}
Zero obviously is a lower bound for the dimension. Its role here is to emphasize the fact that for $\alpha \geq 1/3$ we cannot have a better lower bound.
\end{remark}

\begin{proof}[Proof of~\eqref{Equation I: lower bound}]
Let $\theta >1$. We choose\footnote{Even though in our proof $\theta$ is not necessarily a natural number and thus the $n_k$'s may not  be natural numbers, it is not very hard to see that one gets the same results by using the integer part of each $n_k$. The same is true for the $\alpha n_k$'s. To avoid further strain in the notation we exclude it in our proof.} $n_k = \theta^{k}$, $k\in \mathbb{N}$. 
Define the set
\[
\overline{\mathcal{U}}(\alpha):= \left\lbrace \underline{x}\in \Sigma_{\Gamma} : \ d_{\Sigma}(\sigma^{n_k}(\underline{x}), \underline{x})< \lambda^{-\alpha n_{k+1}}, \ \forall k\in \mathbb{N} \right\rbrace.
\]
Then, we obviously have that
\[
\overline{\mathcal{U}}(\alpha) \subset \mathcal{U}'(\alpha)
\]
which means that it is sufficient to get a lower estimate for the dimension of $\overline{\mathcal{U}}(\alpha)$. The condition that an $\underline{x}$ must satisfy for each $k\in \mathbb{N}$ in order to be an element of $\overline{\mathcal{U}}(\alpha)$ (with the given metric in the shift space) imposes that the  blocks 
\[
[n_{k}- \alpha n_{k+1}, \ldots , n_{k} +\alpha n_{k+1}]
\] 
must be equal to the blocks 
\[
[- \alpha n_{k+1}, \ldots , \alpha n_{k+1}].
\] 
In what follows, we will study the behaviour of these fixed blocks and how different conditions on $\alpha$ give different behaviour and thus dimension.

Firstly, to secure some freedom, we need that
\begin{equation} \label{eq. condition 1 (for all alpha)}
n_k \geq \alpha n_{k+1} \ , \quad \text{for all } k\in \mathbb{N}.
\end{equation} 
Indeed, if $n_k < \alpha n_{k+1}$ then there would be no free digits left since
\[
n_k - \alpha n_{k+1} \to -\infty \ , \  \text{ as } k\to +\infty
\]
and of course, by definition
\[
n_k + \alpha n_{k+1} \to +\infty \ , \ \text{ as } k\to +\infty.
\]
Inequality \eqref{eq. condition 1 (for all alpha)} implies that
\begin{equation} \tag{1$'$} \label{eq. condition 1' (for all alpha)}
1<\theta <1/\alpha.
\end{equation}
Observe that \eqref{eq. condition 1' (for all alpha)} suggest that $\alpha <1$.
Furthermore, this means that the fixed blocks
\[
[n_{k}- \alpha n_{k+1}, \ldots , n_{k} +\alpha n_{k+1}]
\]
live in the right-hand side, i.e.\ in the positions after the position $0$.

%%%%no overlaps%%%
Firstly we assume that 
\begin{equation} \label{eq. case 1}
n_k + \alpha n_{k+1} <  n_{k+1} - \alpha n_{k+2} \ , \quad \text{for all } k\in \mathbb{N}.
\end{equation}
This implies that the fixed blocks do not intersect each other (see Figure~\ref{Figure  no overlap}). 
Now \eqref{eq. case 1} implies that 
\begin{equation} \tag{2$'$} \label{eq. case 1'}
1+\alpha \theta < \theta - \alpha \theta^2.
\end{equation}
The only way that we can find a $\theta$ that satisfies relation \eqref{eq. case 1'} is if $0<\alpha<3-2\sqrt{2}$. Indeed, in order for the inequality \eqref{eq. case 1'} to be true for some $\theta >1$, we need the quadratic polynomial that is created to have real roots. This corresponds to $1- 6\alpha +\alpha^2$ being positive. Thus, since  $\alpha <1$, we must have that $\alpha < 3-2\sqrt{2}$, which is the smallest of the two (positive) roots of the polynomial $1- 6\alpha +\alpha^2$.

Let $\underline{x} \in \overline{\mathcal{U}}(\alpha)$. We define for every $k\geq 2$
\[
\mu \big( C_m(\underline{x}) \big) := 
\begin{cases}
\lambda^{-2m + 2\alpha \sum_{i=1}^{k-1}n_{i+1}+ \big( m-(n_k - \alpha n_{k+1}) \big)} \ , \quad  &n_k - \alpha n_{k+1} \leq m \leq n_k + \alpha n_{k+1} \\[10pt]
\lambda^{-2m + 2\alpha \sum_{i=1}^{k-1}n_{i+1}} \ , \quad  &n_k + \alpha n_{k+1} \leq m \leq n_{k+1} - \alpha n_{k+2}
\end{cases}
\]
It is a standard procedure to check that $\mu$ is a well defined, positive and finite measure on $\overline{\mathcal{U}}(\alpha)$. Therefore, (see for example \cite{Falconer techniques}) we can get a lower bound for the dimension by calculating the lower local dimension of the measure $\mu$ for some $\underline{x} \in \overline{\mathcal{U}}(\alpha)$
\begin{align*}
\underline{\dim}_{H} \mu (\underline{x}) \ 
&= \ 
\liminf_{m\to +\infty} \frac{\log \mu (C_m(\underline{x}))}{\log \lambda^{-m}} \ 
= \ 
\lim_{k\to +\infty} \frac{\log \mu (C_{n_{k}+\alpha n_{k+1}}(\underline{x}))}{\log \lambda^{-(n_{k}+\alpha n_{k+1})}}  
\\[10pt]&= \ 
\lim_{k\to +\infty} \frac{\log \lambda^{-2(n_{k}+\alpha n_{k+1}) + 2\alpha \sum_{i=1}^{k}n_{i+1}}}{\log \lambda^{-(n_{k}+\alpha n_{k+1})}} \ 
= \ 
\lim_{k\to +\infty} \frac{2(n_{k}+\alpha n_{k+1}) - 2\alpha \sum_{i=1}^{k}n_{i+1}}{n_{k}+\alpha n_{k+1}} 
\\[10pt]&= \ 
2 - 2\alpha \lim_{k\to +\infty} \frac{\sum_{i=1}^{k} \theta^{i+1}}{\theta^k + \alpha \theta^{k+1}} \ 
= \ 
2 - 2\alpha \frac{\theta^2}{(1+\alpha \theta) (\theta -1)}.
\end{align*}
Now by maximizing the lower local dimension we will get the best lower bound (from this procedure). This occurs when the function $\theta \mapsto \frac{\theta^2}{(1+\alpha \theta) (\theta -1)}$ minimizes in $(1, 1/\alpha)$. It is not hard to see that this happens for $\theta_0 = \frac{2}{1-a} \in \big(1, 1/\alpha)$ and it takes the value $\frac{4}{(1+\alpha)^2}$. In fact $\theta_0$ is indeed appropriate so that \eqref{eq. case 1'} also holds. Therefore, for this $\theta_0$
\[
\underline{\dim}_{H} \mu (\underline{x}) \ = \ 2 - 2\alpha \frac{4}{(1+\alpha)^2} \ = \ 2\frac{(1-\alpha)^2}{(1+\alpha)^2}
\]
which implies that
\[
\dim_{H} \big( \overline{\mathcal{U}}(\alpha) \big) \ \geq \ 2\frac{(1-\alpha)^2}{(1+\alpha)^2} .
\]

\begin{figure} [h] 
\begin{tikzpicture}
\path (-4,0) --node{$\cdots$} (-2,0);
\draw (-2.6,0) -- (-2.2,0);
\draw (-2.3,0) node[anchor=north, outer sep=0.2cm] {\footnotesize $-1$};
\draw (-1.8,0) node[anchor=north, outer sep=0.2cm] {\footnotesize $0$};
\draw (-1.3,0) node[anchor=north, outer sep=0.2cm] {\footnotesize $1$};
\draw[|-] (-2.2,0)--(-1.8,0);
\draw[|-] (-1.8,0)--(-1.3,0);
\draw[|-] (-1.3,0)--(0.5,0);
\draw[blue, very thick, |-] (0.5,0) -- (2,0);
\draw[blue, very thick, |-|] (2,0) -- (3.5,0);
\draw (0.5,0) node[anchor = north, outer sep=0.2cm] {\footnotesize $n_{k}-\alpha n_{k+1}$};
\draw (2,0) node[anchor = north, outer sep=0.2cm] {\footnotesize $n_{k}$};
\draw (3.5,0) node[anchor = north, outer sep=0.2cm] {\footnotesize $n_{k} + \alpha n_{k+1}$};
\draw (3.5,0) -- (12,0);
\draw[blue, very thick, |-] (7,0)--(9.2,0);
\draw[blue, very thick, |-|] (9.2,0) -- (11.4,0);
\draw (7,0) node[anchor = north, outer sep=0.2cm] {\footnotesize $n_{k+1}-\alpha n_{k+2}$};
\draw (9.3,0) node[anchor = north, outer sep=0.2cm] {\footnotesize $n_{k+1}$};
\draw (11.4,0) node[anchor = north, outer sep=0.2cm] {\footnotesize $n_{k+1} + \alpha n_{k+2}$};
\path (11.8,0) --node{$\cdots$} (12.8,0);
\end{tikzpicture}
\caption{Example for the positions of the fixed blocks for a $1<\theta <1/\alpha$, given that $0\leq \ \alpha \ \leq 3-2\sqrt{2}$ (no overlaps).}
\label{Figure  no overlap}
\end{figure}
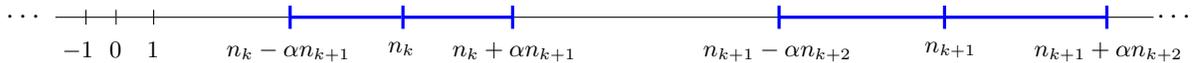

%%%%with overlaps%%%%%
We assume now that
\begin{equation} \label{eq. case 2}
n_k + \alpha n_{k+1} >  n_{k+1} - \alpha n_{k+2} \ , \quad \text{for all } k\in \mathbb{N}.
\end{equation}
This implies that there are intersections of the fixed blocks that occur (see Figure~\ref{Figure  with overlaps}). 
Now relation \eqref{eq. case 2} implies that
\begin{equation} \tag{3$'$} \label{eq. case 2'}
1+\alpha \theta > \theta - \alpha \theta^2.
\end{equation}
We can find a $\theta$ that satisfies relation \eqref{eq. case 2'} given that $\alpha > 3-2\sqrt{2}$. In fact it is satisfied for all $\theta >1$ as long as  $\alpha > 3-2\sqrt{2}$. This condition, completely fixes all the digits the in the right-hand side, except for a finite starting block from position $0$ to position $N_1=\theta$, which does not contribute to the dimension. 

\begin{figure}[H] 
\begin{tikzpicture}
\path (-4,0) --node{$\cdots$} (-2,0);
\draw (-2.6,0) -- (-2.2,0);
\draw (-2.3,0) node[anchor=north, outer sep=0.2cm] {\footnotesize $-1$};
\draw (-1.8,0) node[anchor=north, outer sep=0.2cm] {\footnotesize $0$};
\draw (-1.3,0) node[anchor=north, outer sep=0.2cm] {\footnotesize $1$};
\draw[|-] (-2.2,0)--(-1.8,0);
\draw[|-] (-1.8,0)--(-1.3,0);
\draw[|-] (-1.3,0)--(12,0);
\draw[blue, thick, |-] (-0.5,0.07) -- (1,0.07);
\draw[blue, thick, |-|] (1,0.07) -- (2.5,0.07);

\draw[blue, dashed] (-0.485, 0) -- (-0.485,0.6);
\draw[blue, dashed] (1.015, 0) -- (1.015,0.6);
\draw[blue, dashed] (2.485, 0) -- (2.485,0.6);
\draw (-0.5,0) node[anchor = south, outer sep=0.5cm] {\footnotesize \textcolor{blue}{$n_{k}-\alpha n_{k+1}$}};
\draw (1,0) node[anchor = south, outer sep=0.5cm] {\footnotesize \textcolor{blue}{$n_{k}$}};
\draw (2.5,0) node[anchor = south, outer sep=0.5cm] {\footnotesize \textcolor{blue}{$n_{k} + \alpha n_{k+1}$}};
\draw (6.4,0) -- (12,0);
%\draw[red, very thick] (2.02,-0) -- (2.02,-0.3);
%\draw[red, very thick] (4.22,0) -- (4.22,-0.3);
%\draw[red, very thick] (6.38,-0) -- (6.38,-0.3);
\draw[red, thick, |-] (2,-0.07)--(4.2,-0.07);
\draw[red, thick, |-|] (4.2,-0.07) -- (6.4,-0.07);

\draw[red, dashed] (2.015, 0) -- (2.015,-0.6);
\draw[red, dashed] (4.215, 0) -- (4.215,-0.6);
\draw[red, dashed] (6.385, 0) -- (6.385,-0.6);
\draw (2,0) node[anchor = north, outer sep=0.5cm] {\footnotesize \textcolor{red}{$n_{k+1}-\alpha n_{k+2}$}};
\draw (4.4,0) node[anchor = north, outer sep=0.5cm] {\footnotesize \textcolor{red}{$n_{k+1}$}};
\draw (6.4,0) node[anchor = north, outer sep=0.5cm] {\footnotesize \textcolor{red}{$n_{k+1} + \alpha n_{k+2}$}};
\path (11.8,0) --node{$\cdots$} (12.8,0);
\end{tikzpicture}

\hfill\\

\begin{tikzpicture}
\path (-4,0) --node{$\cdots$} (-2,0);
\draw (-2.6,0) -- (-2.2,0);
\draw (-2.3,0) node[anchor=north, outer sep=0.2cm] {\footnotesize $-1$};
\draw (-1.8,0) node[anchor=north, outer sep=0.2cm] {\footnotesize $0$};
\draw (-1.3,0) node[anchor=north, outer sep=0.2cm] {\footnotesize $1$};
\draw[|-] (-2.2,0)--(-1.8,0);
\draw[|-] (-1.8,0)--(-1.3,0);
\draw[|-] (-1.3,0)--(12,0);
\draw[blue, thick, |-] (-0.5,0.07) -- (1,0.07);
\draw[blue, thick, |-|] (1,0.07) -- (2.5,0.07);
\draw (1,0) node[anchor = north, outer sep=0.2cm] {\footnotesize \textcolor{blue}{$n_{k}$}};

\draw (1.5,0) -- (12,0);
\draw[red, thick, |-] (2,-0.07)--(4.2,-0.07);
\draw[red, thick, |-|] (4.2,-0.07) -- (6.4,-0.07);
\draw (4.4,0) node[anchor = north, outer sep=0.2cm] {\footnotesize \textcolor{red}{$n_{k+1}$}};

\draw[teal, thick, |-] (2.25,-0.15)--(7,-0.15);
\draw[teal, thick, |-|] (7,-0.15) -- (11.75,-0.15);
\draw (7.2,0) node[anchor = north, outer sep=0.2cm] {\footnotesize \textcolor{teal}{$n_{k+2}$}};

\path (11.8,0) --node{$\cdots$} (12.8,0);
\end{tikzpicture}
\caption{Examples for the positions of the fixed blocks for a $1<\theta <1/\alpha$, given that $\alpha \ \geq 3-2\sqrt{2}$. The first one depicts single overlapping, where the second one depicts double overlapping \big(see also \eqref{eq. condition for centres, for case 2} and \eqref{eq. condition about the left ends of consecutive fixed blocks}\big).}
\label{Figure  with overlaps}
\end{figure}
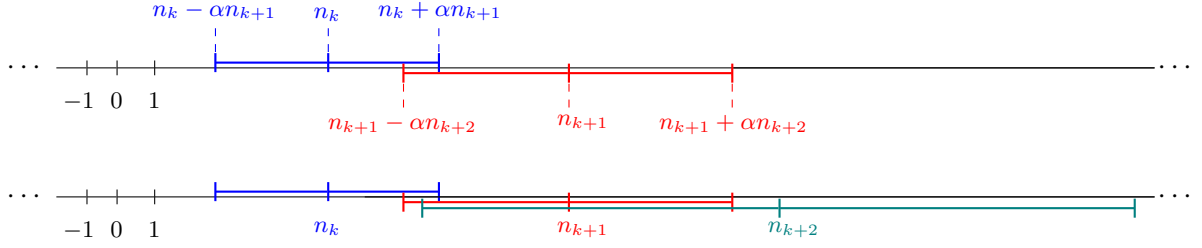

Furthermore, it creates some fixed blocks in the left-hand side. Indeed, let us assume that $d_{\Sigma}(\sigma^{n_{k}}(\underline{x}), \underline{x})< \lambda^{-\alpha n_{k+1}}$ and $d_{\Sigma}(\sigma^{n_{k+1}}(\underline{x}), \underline{x})< \lambda^{-\alpha n_{k+2}}$. Then $x_{n + i} = x_i$ for $|i| \leq \alpha n_{k+1}$ and $x_{n_{k+1} + j} = x_j$ for $|j| \leq \alpha n_{k+2}$. Assuming that the overlapping condition given by \eqref{eq. case 2} also holds, we have that $x_{-\alpha n_{k+2} + \ell} = x_{n_{k+1}-\alpha n_{k+2} + \ell} = x_{n_{k+1} - n_k - \alpha n_{k+2} + \ell}$ for all $\ell \in [0, n_k + \alpha n_{k+1} - n_{k+1} + \alpha n_{k+2}]$. In particular, this means that the digits of $\underline{x}$ from the position $-\alpha n_{k+2}$ to $n_k+(\alpha -1)n_{k+1}$, if $n_k+(\alpha -1)n_{k+1}<0$ or from the position  $-\alpha n_{k+2}$ to $0$, otherwise, are fixed (see also Figure~\ref{Figure how the left fixed blocks appear}). 

Restating what was discussed above, for each $k$, the condition
\[
 d_{\Sigma}(\sigma^{n_{k+1}}(\underline{x}), \underline{x})< \lambda^{-\alpha n_{k+2}}
\]
creates (in the left-hand side) the fixed block
\begin{align}
[- \alpha n_{k+2}, \ldots ,  n_k +(\alpha -1)n_{k+1}] \ , \quad \text{ if } n_{k+1} > n_k + \alpha n_{k+1} \label{eq. condition for centres, for case 2} \\
[- \alpha n_{k+2}, \ldots ,  0] \ , \quad \text{ if } n_{k+1} \leq n_k + \alpha n_{k+1}.
\end{align}
This means that different conditions on where the centres of each block land (in the right-hand side) with respect to the previous block gives different fixed blocks in the left-hand side. Of course the condition \eqref{eq. condition for centres, for case 2} is the only one of the two that does not fixes all the digits in the left-hand side as well. So we need each of the centres of the fixed block to land outside of the immediately preceding fixed block (as it is depicted in Figure~\ref{Figure  with overlaps}) and it gives
\begin{equation} \tag{4$'$} \label{eq. condition' for centres, for case 2}
\theta> 1+\alpha \theta \ \Rightarrow \ \theta > \frac{1}{1- \alpha}.
\end{equation}
Condition \eqref{eq. condition' for centres, for case 2} along with the initial condition \eqref{eq. condition 1' (for all alpha)} imposes some further restriction on $\alpha$. Namely, we need
\[
\frac{1}{\alpha} > \frac{1}{1- \alpha}
\]
which can be satisfied only for $\alpha< 1/2$. Therefore we have
\[
3-2\sqrt{2} < \alpha < \frac{1}{2}.
\]
Lastly, it is not very hard to see that $\theta \in (1, 1/\alpha )$ also implies that 
\begin{equation} \label{eq. condition about the left ends of consecutive fixed blocks}
n_{k}-\alpha n_{k+1} < n_{k+1} - \alpha n_{k+2} \ , \quad \text{for all } k\in \mathbb{N}
\end{equation}
which is in line with the second figure in Figure~\ref{Figure  with overlaps}.

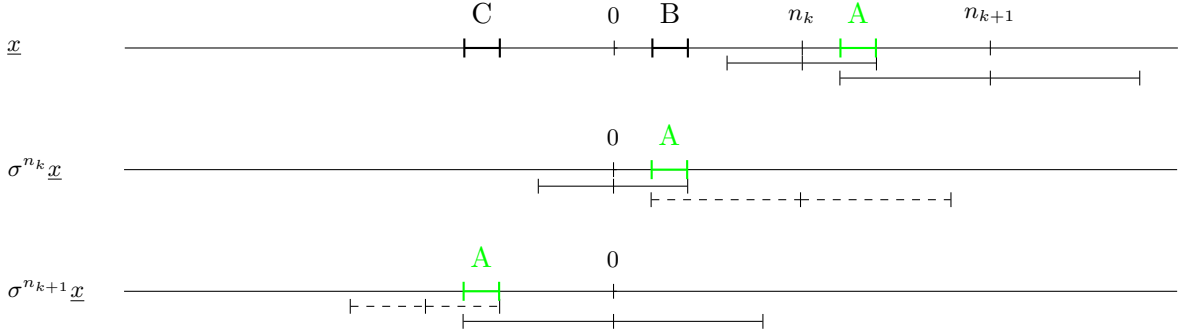
\begin{figure}[H] 
\begin{tikzpicture}
\draw (-4.05,0) node[anchor=east, outer sep=0.2cm] {\footnotesize $\underline{x}$};
\draw (-3,0)--(11,0);

\draw[|-] (3.5,0) -- (3.55,0);
\draw (3.5,0) node[anchor=south, outer sep=0.2cm] {\footnotesize $0$};

\draw [|-] (5,-0.2) -- (6,-0.2);
\draw [|-|] (6,-0.2) -- (7,-0.2);
\draw [|-] (6,0) -- (6.01,0);
\draw (6,0) node[anchor=south, outer sep=0.2cm] {\footnotesize $n_{k}$};

\draw [|-] (6.5,-0.4) -- (8.5,-0.4);
\draw [|-|] (8.5,-0.4) -- (10.5,-0.4);
\draw [|-] (8.5,0) -- (8.51,0);
\draw (8.5,0) node[anchor=south, outer sep=0.2cm] {\footnotesize $n_{k+1}$};

\draw[green, thick, |-|] (6.5,0)--(7,0);
\draw (6.75,0) node[anchor=south, outer sep=0.2cm] {\textcolor{green}{A}};
\draw[thick, |-|] (4,0)--(4.5,0);
\draw (4.25,0) node[anchor=south, outer sep=0.2cm] {B};
\draw[thick, |-|] (1.5,0)--(2,0);
\draw (1.75,0) node[anchor=south, outer sep=0.2cm] {C};
\end{tikzpicture}

\hfill\\

\begin{tikzpicture}
\draw (-3.5,0) node[anchor=east, outer sep=0.2cm] {\footnotesize $\sigma^{n_k}\underline{x}$};
\draw (-3,0)--(11,0);

\draw[|-] (3.5,0) -- (3.55,0);
\draw (3.5,0) node[anchor=south, outer sep=0.2cm] {\footnotesize $0$};

\draw [|-] (2.5,-0.22) -- (3.5,-0.22);
\draw [|-|] (3.5,-0.22) -- (4.5,-0.22);

\draw [dashed, |-|] (4,-0.4) -- (6,-0.4);
\draw [dashed, -|] (6,-0.4) -- (8,-0.4);

\draw[green, thick, |-|] (4,0)--(4.5,0);
\draw (4.25,0) node[anchor=south, outer sep=0.2cm] {\textcolor{green}{A}};
\end{tikzpicture}

\hfill\\

\begin{tikzpicture}
\draw (-3.2,0) node[anchor=east, outer sep=0.2cm] {\footnotesize $\sigma^{n_{k+1}}\underline{x}$};
\draw (-3,0)--(11,0);

\draw[|-] (3.5,0) -- (3.55,0);
\draw (3.5,0) node[anchor=south, outer sep=0.2cm] {\footnotesize $0$};

\draw [dashed, |-] (0,-0.2) -- (1,-0.2);
\draw [dashed, |-|] (1,-0.2) -- (2,-0.2);

\draw [|-] (1.5,-0.4) -- (3.5,-0.4);
\draw [|-|] (3.5,-0.4) -- (5.5,-0.4);

\draw[green, thick, |-|] (1.5,0)--(2,0);
\draw (1.75,0) node[anchor=south, outer sep=0.2cm] {\textcolor{green}{A}};

\end{tikzpicture}
\caption{A simple depiction of how the fixed blocks in the left-hand side appear: Block A (green) denotes the block we get by the overlapping condition and how it "slides" as we apply the shift operator. The condition $ d_{\Sigma}(\sigma^{n_{k}}(\underline{x}), \underline{x})< \lambda^{-\alpha n_{k+1}}$ forces the digits in block A to be equal with the digits in block B. The condition $d_{\Sigma}(\sigma^{n_{k+1}}(\underline{x}), \underline{x})< \lambda^{-\alpha n_{k+2}}$ forces the digits block A to be equal with the digits in block C. Since block A is already predetermined though by block B, block C is a fixed block too.}
\label{Figure how the left fixed blocks appear}
\end{figure}

In order to secure some freedom---even with all the previous restrictions---we need some further conditions. Namely, we need the fixed blocks in the left-hand side to be disjoint. More precisely, the blocks that are created by consecutive iterations---i.e.\ by $\sigma^{n_{k+1}}$ and $\sigma^{n_{k+2}}$---need to be disjoint. Then the two blocks that are created are
\[
[- \alpha n_{k+3}, \ldots , n_{k+1} +(\alpha -1)n_{k+2}] \quad \text{ and } \quad [- \alpha n_{k+2}, \ldots , n_k +(\alpha -1)n_{k+1}] 
\]
and since $- \alpha n_{k+3} < - \alpha n_{k+2}$, the only way to ensure disjointness (see Figure~\ref{Figure  fixed blocks left-hand side}) is if
\begin{equation} \label{eq. create disjointness in the left side}
n_{k+1} +(\alpha -1)n_{k+2} < - \alpha n_{k+2}
\end{equation}
which implies, using the fact that $\alpha < \frac{1}{2}$, that
\begin{equation} \tag{7$'$} \label{eq. create' disjointness in the left side}
\theta > \frac{1}{1-2\alpha}.
\end{equation}
Combining this with the initial condition on $\theta$ we need the following inequality to hold
\[
\frac{1}{\alpha} > \frac{1}{1-2\alpha}.
\]
This can only occur for $\alpha<1/3$. So in the end we need
\[
3-2\sqrt{2} \ < \ \alpha \ < \ \frac{1}{3}.
\]

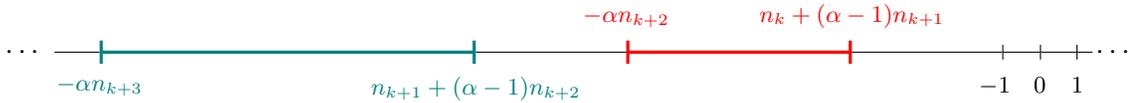
\begin{figure} [h] 
\begin{tikzpicture}
\path (-4,0) --node{$\cdots$} (-2,0);
\draw (-2.6,0) -- (-2,0);

\draw[teal, very thick, |-|] (-2,0) -- (3,0);
\draw (-2,0) node[anchor = north, outer sep=0.2cm] {\footnotesize \textcolor{teal}{$- \alpha n_{k+3}$}};
\draw (3,0) node[anchor = north, outer sep=0.2cm] {\footnotesize \textcolor{teal}{$n_{k+1} +(\alpha -1) n_{k+2}$}};

\draw (3,0) -- (9,0);
\draw[red, very thick, |-|] (5,0) -- (8,0);
\draw (5,0) node[anchor = south, outer sep=0.2cm] {\footnotesize \textcolor{red}{$- \alpha n_{k+2}$}};
\draw (8,0) node[anchor = south, outer sep=0.2cm] {\footnotesize \textcolor{red}{$n_{k} +(\alpha -1) n_{k+1}$}};

\draw (9,0) -- (10,0);
\draw (11,0) -- (11.2,0);
\draw[|-] (10,0) -- (10.5,0);
\draw[|-|] (10.5,0) -- (11,0);
\draw (9.9,0) node[anchor=north, outer sep=0.2cm] {\footnotesize $-1$};
\draw (10.5,0) node[anchor=north, outer sep=0.2cm] {\footnotesize $0$};
\draw (11,0) node[anchor=north, outer sep=0.2cm] {\footnotesize $1$};

\path (11,0) --node{$\cdots$} (12,0);
\end{tikzpicture}
\caption{Example for the positions of the fixed blocks in the left-hand side, given we have ensured disjointness.}
\label{Figure  fixed blocks left-hand side}
\end{figure}

Let 
\[
3-2\sqrt{2} < \alpha  < \frac{1}{3} \qquad \text{ and } \qquad \frac{1}{1-2\alpha}<\theta<\frac{1}{\alpha}
\] 
and consider an
$\underline{x} \in \overline{\mathcal{U}}(\alpha)$. We define for every $k\geq 1$
\begin{align*}
&\mu \big( C_m(\underline{x}) \big) 
:=
\\&= 		
\begin{cases}
\lambda^{-m + \sum_{i=1}^{k} \big(n_i + (\alpha-1)n_{i+1} + \alpha n_{i+2}\big)} \ , \  &\alpha n_{k+2} \leq \ m \ \leq (1-\alpha )n_{k+2}-n_{k+1}  \\[10pt] 
\lambda^{-m + \sum_{i=1}^{k} \big( n_i + (\alpha-1)n_{i+1} + \alpha n_{i+2}\big) + \big(m + n_{k+1}+(\alpha -1)n_{k+2}) \big)} \ , \  &(1-\alpha)n_{k+2}-n_{k+1} \leq \ m \ \leq \alpha n_{k+3}
\end{cases}
\end{align*}
The main difference from the first case in the definition of the measure $\mu$ here is that there is essentially zero freedom from the right-hand side and thus it resembles the definition as if we were in the one-sided shift (which would have been the case if we were, for example, in the $1$-dimensional torus). 

Now we have
\begin{align*}
\underline{\dim}_{H} \mu (\underline{x}) \ 
&= \ 
\liminf_{m\to +\infty} \frac{\log \mu (C_m(\underline{x}))}{\log \lambda^{-m}} \ 
= \ 
\lim_{k\to +\infty} \frac{\log \mu (C_{\alpha n_{k+2}}(\underline{x}))}{\log \lambda^{-\alpha n_{k+2}}}  
\\[10pt]&= \ 
\lim_{k\to +\infty} \frac{\log \lambda^{-\alpha n_{k+2} + \sum_{i=1}^{k} \big(n_i + (\alpha-1)n_{i+1} + \alpha n_{i+2}\big)}}{\log \lambda^{-\alpha n_{k+2}}} \ 
\\[10pt]&= \ 
\lim_{k\to +\infty} \frac{\alpha n_{k+2} - \sum_{i=1}^{k} \big(n_i + (\alpha-1)n_{i+1} + \alpha n_{i+2}\big)}{\alpha n_{k+2}} 
\\[10pt]&= \ 
1 - \lim_{k\to +\infty} \frac{\sum_{i=1}^{k} \big(\theta^i + (\alpha-1)\theta^{i+1} + \alpha \theta^{i+2}\big)}{\alpha \theta^{k+2}} \ 
\\[10pt]&= \ 
1 - \lim_{k\to +\infty} \frac{\big(1 + (\alpha-1)\theta + \alpha \theta^{2}\big) \sum_{i=1}^{k} \theta^i }{\alpha \theta^{k+2}} \
\\[10pt]&= \
1 - \frac{\alpha \theta^2 +(\alpha -1) \theta +1}{\alpha \theta (\theta -1)}  \ 
= \  
\frac{(1- 2\alpha) \theta -1}{\alpha \theta (\theta -1)}.
\end{align*}
Via standard calculus methods, one can see that this maximizes at $\theta_1 = \frac{1}{1-\sqrt{2\alpha}}$. Of course $\theta_1$ must satisfy the inequalities \eqref{eq. condition 1' (for all alpha)} and \eqref{eq. create' disjointness in the left side}, i.e.\ we need to have
\begin{equation}
\frac{1}{1-2\alpha} < \frac{1}{1-\sqrt{2\alpha}} < \frac{1}{\alpha}.
\end{equation}
While the first inequality is true for all $0< \alpha <1/2$, for the second one, we (seemingly) have some further restrictions on $\alpha$ since it yields that $\alpha$ should be less than $2-\sqrt{3}$ (which is smaller than $1/3$). Thus, in order to maximize the lower local dimension of $\mu$ we need to consider the two cases.
Firstly we assume that $3-2\sqrt{2} \ < \ \alpha \ < \ 2-\sqrt{3}$. For this case, we have that $\theta_1$ is allowed, i.e.\ satisfies 
\[
\frac{1}{1-2\alpha} < \theta_1 < \frac{1}{\alpha}
\]
and the local dimension takes the (maximum) value
\[
\frac{(1-\sqrt{2\alpha})^2}{\alpha}
\]
and therefore,
\[
\dim_{H}\big( \overline{\mathcal{U}}(\alpha) \big) \ \geq \ \frac{(1-\sqrt{2\alpha})^2}{\alpha}.
\]
We now consider the remaining possible values for $\alpha$, i.e.\ we assume that $2-\sqrt{3} \ \leq \ \alpha \ < \ \frac{1}{3}$. We have to treat this case differently, since now the maximizing $\theta_1$ is not permitted, i.e.
\[
\frac{1}{1-2\alpha} \ < \ \theta \ < \ \frac{1}{\alpha} \ < \ \frac{1}{1-\sqrt{2\alpha}} =:\theta_1.
\]
Again, with standard calculus methods, we can show that 
\[
\theta \mapsto \frac{(1- 2\alpha) \theta -1}{\alpha \theta (\theta -1)} 
\]
is strictly increasing in the interval $[\frac{1}{1-2\alpha} , \frac{1}{\alpha}]$. Hence the maximum value is $\frac{1-3\alpha}{1-\alpha}$ and it is achieved at $\frac{1}{\alpha}$. Therefore, 
\[
\underline{\dim}_{H} \mu (\underline{x}) \ \geq \ \frac{1-3\alpha}{1-\alpha} -\epsilon
\]
for all $\epsilon>0$, which implies that
\[
\dim_{H}\big( \overline{\mathcal{U}}(\alpha) \big) \ \geq \ \frac{1-3\alpha}{1-\alpha}
\]
and that completes the proof.
\end{proof}

\begin{remark}
Observe that for each of the relations 
\eqref{eq. condition 1 (for all alpha)}, 
\eqref{eq. case 1}, \eqref{eq. case 2}, 
\eqref{eq. condition for centres, for case 2}, 
\eqref{eq. condition about the left ends of consecutive fixed blocks} 
and 
\eqref{eq. create disjointness in the left side} 
it is sufficient to find just one $k_0$ for which they are true and that immediately implies that they are true for all $k\geq 1$.
\end{remark}

\section{Upper Bound} \label{Section upper-isomorphism}
In this section we are going to prove that
\begin{equation} \tag {II} \label{Equation II: lower bound}
    \dim_{H} \big( \mathcal{U}'(\alpha) \big) 
    \leq  \ 
		\begin{cases}
			2 \frac{(1-\alpha)^2}{(1+\alpha)^2} \ , \quad &0\leq \ \alpha \ \leq 3-2\sqrt{2}
			\\[12pt]
			\frac{(1-\sqrt{2\alpha})^2}{\alpha} \ , \quad &3-2\sqrt{2} \leq \ \alpha \ \leq 2-\sqrt{3} 				\\[10pt]
			\frac{1-3\alpha}{1-\alpha} \ , \quad  &2-\sqrt{3} \leq \ \alpha \ \leq 1/3 
			\\[10pt]
			0 \ , \quad &\alpha \geq 1/3 
		\end{cases}
\end{equation}

	The proof revolves around finding an optimal cover of $\mathcal{U}'(\alpha)$.
	This is done by carefully estimating the number of free digits of $\underline{x}$
	and obtaining a cover using cylinder sets. In contrast to the proof of
	the lower bound, we can no longer only consider the collection of
	$\underline{x}$ with fixed blocks with midpoints $n_k = \theta^{k}$; however, we do
	find that the $\underline{x}$ that determine the dimension of $\mathcal{U}'(\alpha)$ are
	exactly those for which the fixed blocks are distributed along such a
	sequence.

	The proof of the theorem is based on the proof of Theorem~1.2 in
	Bugeaud and Liao \cite{Bugeaud Liao}. We make a modification to
	account for an error in their proof. Although the conclusion of the
	theorem is correct, the authors pass to a subsequence, yet keep using
	the relationship between consecutive terms of the original sequence.
	This occurs at Equation~2.6 in their paper. We do not pass to a
	subsequence and instead work with the original sequence and a
	subsequence at the same time. The same modification can be applied to
	Bugeaud and Liao's paper, correcting the error.

	Again, for notational simplicity, we omit rounding certain quantities to the
	nearest integer, e.g.\ $C_{\alpha N}$ instead of
	$C_{\lceil \alpha N \rceil}$. The proofs essentially remain unchanged
	were one not to do this. Furthermore, as the fixed blocks, and in particular their ends, are not specifically predetermined as in the proof of ~\eqref{Equation I: lower bound}, it is more convenient to work with their lengths instead.

\begin{terminology}
	For $\underline{x} \in \Sigma_\Gamma$, we call the digits $x_1,x_2,\dotsc$ the
	`right digits' of $\underline{x}$, and the digits $x_{-1}, x_{-2}, \dotsc$ the
	`left digits' of $\underline{x}$.
	The condition $d(\sigma^{n}\underline{x},\underline{x}) < \lambda^{-\alpha N}$ implies that
	$x_{n + k} = x_k$ for $|k| \leq \alpha N$. We refer to the digits of
	$\underline{x}$ from $n - \alpha N$ to $n + \alpha N$ as `fixed digits' of $\underline{x}$
	and their collection as the `fixed block with midpoint $n$' of $\underline{x}$.
	The digits in between the fixed blocks of $\underline{x}$ are called `free digits'
	and form the `free blocks' of $\underline{x}$. We refer to the number of digits in
	a fixed block of $\underline{x}$ as the `length' of the fixed block. 
	
	If two fixed blocks overlap, then a fixed block consisting of left digits of $\underline{x}$
	arises. To see this, suppose that $d(\sigma^{n}\underline{x}, \underline{x}) < \lambda^{-\alpha N}$ and
	$d(\sigma^{m}\underline{x}, \underline{x}) < \lambda^{-\alpha M}$ with $n < m$ and
	$m - \alpha M < n + \alpha N$. For simplicity, suppose also that
	neither fixed block contains the other, i.e., that
	$n - \alpha N < m - \alpha M$ and $n + \alpha N < m + \alpha M$. Then
	$x_{n + k} = x_k$ for $|k| \leq \alpha N$ and $x_{m + k} = x_k$ for
	$|k| \leq \alpha M$. In particular,
	$x_{-\alpha M + k} = x_{m-\alpha M + k} = x_{m - n - \alpha M + k}$
	for
	$k \in [0, n + \alpha N - m + \alpha M] \subset [-\alpha N, \alpha N]$,
	which can be restated as there being a fixed block consisting of the
	digits of $\underline{x}$ from $-\alpha M$ to $n - m + \alpha N$. We refer to this fixed block as a `fixed block on the left.'
\end{terminology}

\begin{proof}[Proof of~\eqref{Equation II: lower bound}]
It is clear that the result holds for $\alpha = 0$. Additionally, if
the statement holds for $\alpha \leq 1/3$, then using the fact that
$\mathcal{U}'(\alpha) \subset \mathcal{U}'(\beta)$ for $\beta \leq \alpha$, we have
that $\dim_{H} \big( \mathcal{U}'(\alpha) \big) = 0$ for $\alpha \geq 1/3$. Thus, it
suffices to restrict our attention to $\alpha \in (0, 1/3]$.

We proceed by finding an optimal cover of $\mathcal{U}'(\alpha)$ in order to
estimate the Hausdorff dimension.
Fix $\alpha \in (0,1/3]$ and let $\varepsilon > 0$. We assume that
$\varepsilon$ is sufficiently small for our purposes.
For each $\underline{x} \in \mathcal{U}'(\alpha)$ we construct a sequence $(n_k)_k$,
where $n_k = n_k(\underline{x})$ satisfies
$d(\sigma^{n_k}\underline{x},\underline{x}) < \lambda^{-\alpha n_{k+1}}$. This is done by assigning to each $N \geq M(\underline{x})$ the longest
fixed block with centre not greater than $N$ and then selecting a
subsequence of these blocks such that each block is necessarily longer
than the previous one:
for each $N \geq M$ let $0 \leq n_N' \leq N$ be the centre of the
longest block up until $N$, i.e.,
\[
  d(\sigma^{n_N'}\underline{x},\underline{x})
  = \min \{d(\sigma^{n} \underline{x}, \underline{x}): 0\leq n\leq N\}.
\]
Let $N_0 = M - 1$ and for $k \geq 1$ define
\[
  N_k = \max \{N \geq N_{k-1} + 1 : n_N' = n_{N_{k-1} + 1}'\}.
\]
Finally, for $k \geq 1$ let $n_k = n_{N_k}'$ and notice that
$n_{k+1} = N_k + 1$ for all $k \geq 1$. Thus, $(n_k)_k$ is an increasing sequence of
numbers such that $n_k$ is the midpoint of a fixed block. We call the
fixed block with midpoint $n_k$ `the $k$-th fixed block' and we denote
its length by $l_k = l_k(\underline{x})$. The $k$-th fixed block can therefore be
written as
\[
\left[n_k - \frac{l_k - 1}{2}, \dotsc, n_k + \frac{l_k - 1}{2}\right]
\]
and the fixed block arising from the overlap of the $k$-th and
$(k+1)$-th fixed blocks can be written as
\begin{equation}\label{eq:upper:block:left}
	\left[ -\frac{l_{k+1} - 1}{2}, \dotsc,
	n_k - n_{k+1} + \frac{l_k - 1}{2} \right].
\end{equation}
For simplicity, we call this block on the left `the $k$-th fixed block
on the left' even though it may be empty. Note that we omit the
explicit dependence of $n_k$ and $l_k$ on $\underline{x}$ when the dependence is
clear from the context. Since $d(\sigma^{n_k}\underline{x},\underline{x}) < \lambda^{-\alpha N_k}$, the
length of the fixed block satisfies $l_k > 2\alpha N_k + 1$ and we
obtain that $\liminf_{k \to \infty} l_k/n_{k+1} \geq 2 \alpha$, using
that $n_{k+1} = N_k + 1$.

The covering we construct will cover the elements $\underline{x}$ with the largest
number of free digits. This covering will automatically cover the
elements $\underline{y}$ such that $n_k(\underline{y}) = n_k(\underline{x})$, but for which
$l_k(\underline{y}) > l_k(\underline{x})$. As a result, we may assume, without loss of
generality, that $l_k \leq 2\alpha N_k + 3$, which implies that
$\limsup_{k \to \infty} l_k/n_{k+1} \leq 2\alpha$. Hence, for
sufficiently large $k$,
\begin{equation}\label{eq:upper:length:k}
  2(\alpha - \varepsilon) n_{k+1} \leq l_k
  \leq 2(\alpha + \varepsilon) n_{k+1}.
\end{equation}

We proceed by establishing inequalities involving $n_k$, $l_k$ and $k$.
Let
\[
  \theta = \limsup_{k \to \infty} \frac{n_{k+1}}{n_k}
\]
and denote by $(n_{k_i})$ a subsequence such that the limit superior is
attained. Again, we omit the explicit dependency of $\theta$ on $\underline{x}$
when it is clear from context. It follows directly that $\theta \geq
1$. We may further restrict our attention to when
\[
  \frac{1}{1-2\alpha} \leq \theta \leq \frac{1}{\alpha}
\]
as there are only countable many $\underline{x} \in \mathcal{U}'(\alpha)$ for which
$\theta(\underline{x})$ does not satisfy the above inequality and thus this
collection of $\underline{x}$ will not contribute to the Hausdorff dimension of
$\mathcal{U}'(\alpha)$.

Indeed, suppose that $\alpha \theta > 1 + \delta$ for some
$\delta > 0$. Then the left endpoint of the $k_i$-th fixed block, i.e.,
the quantity $n_{k_i} - (l_{k_i} - 1)/2$, converges to $-\infty$:
\[
  \lim_{i \to \infty} \left( n_{k_i} - \frac{l_{k_i}-1}{2} \right)
  = \lim_{i \to \infty} \left( 1 - \frac{l_{k_i}-1}{2n_{k_i}}
    \right) n_{k_i}
  = (1 - \alpha\theta) \lim_{i \to \infty} n_{k_i}
  = - \infty,
\]
since $1 - \alpha\theta < - \delta < 0$. This means that there is only
a countable number of $\underline{x} \in \mathcal{U}'(\alpha)$ for which
$\theta > \alpha^{-1}$.

Now suppose that $(1-2\alpha)\theta < 1 - \delta$ for some
$\delta \in (0,1)$. This implies that eventually all blocks on both
sides overlap, thus leaving only finitely many free digits. Indeed, the $k$-th fixed block on the left takes the form
\eqref{eq:upper:block:left}. Thus, the $k$-th and $(k+1)$-th fixed
blocks on the left overlap if
$n_{k+1} - n_{k+2} + (l_{k+1} - 1)/2 > - (l_{k+1} - 1)/2$ or
equivalently if $n_{k+1} + l_{k+1} - n_{k+2} - 1 > 0$. Now,
\[
    \liminf_{k \to \infty} \left( \frac{n_{k+1}}{n_{k+2}} +
      \frac{l_{k+1}}{n_{k+2}} - 1 - \frac{1}{n_{k+2}} \right) \ 
    \geq \  \left(\limsup_{k \to \infty} \frac{n_{k+2}}{n_{k+1}}
      \right)^{-1} + 2\alpha - 1 \ 
    = \ \theta^{-1} + 2\alpha - 1 \ 
    \geq \ \frac{\delta}{1 - \delta}
\]
and so
\[
  \begin{split}
    \liminf_{k \to \infty}(n_{k+1} + l_{k+1} &- n_{k+2} - 1) \\
    &\geq \liminf_{k \to \infty} \left( \frac{n_{k+1}}{n_{k+2}}
      + \frac{l_{k+1}}{n_{k+2}} - 1 - \frac{1}{n_{k+2}} \right)
      \Bigl(\liminf_{k \to \infty} n_{k+2}\Bigr) \\
    &\geq \frac{\delta}{1 - \delta} \liminf_{k \to \infty} n_{k+2} \\
    &= \infty.
  \end{split}
\]
In particular, $n_{k+1} + l_{k+1} - n_{k+2} - 1 > 0$ for all except a
finite number of $k$, i.e., eventually every pair of consecutive fixed
blocks on the left will overlap. Therefore, there are only countably
many $\underline{x} \in \mathcal{U}'(\alpha)$ for which $\theta < (1-2\alpha)^{-1}$.

Using $\lim_{k \to \infty} l_k/n_{k+1} = 2\alpha$, we obtain that
$\lim_{i \to \infty} l_{k_i}/n_{k_i} = 2\alpha \theta$. In particular,
for sufficiently large $k_i$,
\begin{equation}\label{eq:upper:length:i}
  2(\theta\alpha - \varepsilon) n_{k_i} \leq l_{k_i}
  \leq 2(\theta\alpha + \varepsilon) n_{k_i}
\end{equation}
and
\begin{equation}\label{eq:upper:midpoint:i}
  (\theta - \varepsilon)n_{k_i} \leq n_{k_i+1}
  \leq (\theta + \varepsilon)n_{k_i}.
\end{equation}
Furthermore, for sufficiently large $k$,
\begin{equation}\label{eq:upper:midpoint:k}
  n_{k+1} \leq (\theta + \varepsilon)n_k.
\end{equation}

Before continuing, we estimate the number of fixed blocks up to the
fixed block at $n_k$. Using \eqref{eq:upper:midpoint:k}, we obtain that
$n_k \leq (\theta + \varepsilon)^{k-1}n_1$. We immediately find, using
$\theta \leq \alpha^{-1}$, that for sufficiently large $k$,
\begin{equation}\label{eq:upper:k:lower}
  k \geq C_1 \log n_k,
\end{equation}
where $C_1 > 0$ is a constant independent of $\theta$ and
$\varepsilon$.
On the other hand, using \eqref{eq:upper:midpoint:i}, we obtain that
$n_{k_i} \geq (\theta - \varepsilon)^{k_i - 1} n_{k_1}$, and so for
sufficiently large $k_i$, since $\theta \geq (1-2\alpha)^{-1}$,
\begin{equation}\label{eq:upper:ki:upper}
  k_i \leq C_2 \log n_{k_i},
\end{equation}
where $C_2 > 0$ is a constant independent of $\theta$ and
$\varepsilon$.

By restarting the numbering if necessary, we may assume that the
equations~\eqref{eq:upper:length:k}, \eqref{eq:upper:midpoint:k} and
\eqref{eq:upper:k:lower} hold for all $k \geq 1$ and that the
equations~\eqref{eq:upper:length:i}, \eqref{eq:upper:midpoint:i} and
\eqref{eq:upper:ki:upper} hold for all $i \geq 1$.

We continue with establishing an optimal covering of $\mathcal{U}'(\alpha)$.
From a finite cover
$\{ [\theta_j, \theta_j + \varepsilon] \}_{j=1}^{J}$ of
$[(1-2\alpha)^{-1}, \alpha^{-1}]$, we obtain the finite cover
$\mathcal{U}'(\alpha) \subset \bigcup_{j=1}^{J} \mathcal{U}_j'(\alpha)$, where
\[
  \mathcal{U}_j'(\alpha) = \Bigl\{\underline{x}\in\mathcal{U}'(\alpha) : \limsup_{k \to \infty}
  \frac{n_{k + 1}}{n_k} \in [\theta_j, \theta_j + \varepsilon] \Bigr\}.
\]
As a result
$\dim_{H} \big( \mathcal{U}'(\alpha) \big) \leq \max_{j=1,\dotsc,M} \dim_{H} \big( \mathcal{U}_j'(\alpha) \big)$,
and so we turn our attention to finding an upper bound of
$\dim_{H} \big( \mathcal{U}_j'(\alpha) \big)$ using an appropriate cover.

Fix $j \in [1,J]$.
Note that equations \eqref{eq:upper:k:lower} and
\eqref{eq:upper:ki:upper} hold for $\theta$ replaced by $\theta_j$ up
to a slight change of the constants $C_1, C_2$ if necessary.
We find a cover by estimating the number of free
digits of each $\underline{x} \in \mathcal{U}_j'(\alpha)$. If $\underline{y} \in \Sigma_{\Gamma}$ has fixed block with the same centres and at least of the same length as $\underline{x}$ then, $d(\sigma^{n_k(\underline{x})}\underline{y},\underline{y}) < \lambda^{-\alpha n_{k+1}(\underline{x})}$. From here we obtain cylinder sets $C_K(\underline{y})$ where $\underline{y}$ is as described before.
The number of elements of this cover is asymptotically equivalent to
$\lambda^{\text{free digits}}$.

We now proceed by covering $\mathcal{U}_j'(\alpha)$ by considering all the
different ways of arranging the fixed blocks of the elements. We then
construct two different covers. The first one is constructed using
cylinder sets covering up to and including the $k_i$-th fixed block.
The second one is constructed using cylinder sets covering the fixed
blocks on the left up to and including the fixed block arising as a
result of the overlap of the $(k_i - 1)$-th and $k_i$-th fixed blocks.

\emph{Case 1: covering $\mathcal{U}_j'(\alpha)$ by cylinder sets coinciding
with fixed blocks on the right.} We begin by bounding the number of
free digits of elements of $\mathcal{U}_j'(\alpha)$ from above. Let
$\underline{x} \in \mathcal{U}_j'(\alpha)$. The number of fixed digits up to and
including the $k_i$-th block is given by
\[
  \sum_{k=1}^{k_i} l_k.
\]
It is clear that this quantity counts the number of fixed digits when
none of the fixed blocks overlap. When two fixed blocks, say the $k$-th
and $(k+1)$-th, overlap, then the quantity $l_k + l_{k+1}$ counts the
number of fixed digits on the right as well as the fixed digits on the
left arising from the overlap. Now,
\begin{equation}\label{eq:upper:right:1}
  \begin{aligned}[b]
    \sum_{k=1}^{k_i} l_k
    &\geq 2(\alpha - \varepsilon) \sum_{k=1}^{k_i} n_{k+1} \\
    &= 2(\alpha - \varepsilon) n_{k_i}
      \sum_{k=1}^{k_i}\frac{n_{k+1}}{n_{k_i}} \\
    &\geq 2(\alpha - \varepsilon)n_{k_i}
      \biggl(\frac{n_{k_i + 1}}{n_{k_i}} + \frac{1 - (\theta +
      \varepsilon)^{-k_i + 2}}{1 - (\theta +\varepsilon)^{-1}}\biggr)\\
    &\geq 2(\alpha - \varepsilon)
      \biggl(\theta - \varepsilon + \frac{1 - (\theta
      + \varepsilon)^{-C_1 \log n_{k_i} + 2}}{1 - (\theta
      + \varepsilon)^{-1}} \biggr)n_{k_i} ,
  \end{aligned}
\end{equation}
where the first line follows from \eqref{eq:upper:length:k}, the third
line from $n_k/n_{k_i} \geq (\theta + \varepsilon)^{k-k_i}$ by using
\eqref{eq:upper:midpoint:k} and the fourth line from
\eqref{eq:upper:midpoint:i} and \eqref{eq:upper:k:lower}. Thus, for
large $n_{k_i}$ we can bound \eqref{eq:upper:right:1} from below by
\[
  2(\alpha - \varepsilon) \Bigl(\theta - \varepsilon + \frac{1}{1
  - (\theta + \varepsilon)^{-1}} - \varepsilon \Bigr) n_{k_i} \ 
  \geq \ 
  2(\alpha - \varepsilon) \Bigl(\theta + \frac{\theta}{\theta
  + \varepsilon - 1} - 2\varepsilon \Bigr) n_{k_i}.
\]
This, in combination with the fact that
$\theta \in [\theta_j, \theta_j + \varepsilon]$ for
$\underline{x} \in \mathcal{U}_j'(\alpha)$, gives us
\begin{equation}\label{eq:upper:right:2}
  \begin{split}
    \sum_{k=1}^{k_i} l_k
    &\geq 2(\alpha - \varepsilon) \Bigl(\theta + \frac{\theta}
      {\theta + \varepsilon - 1} - 2\varepsilon \Bigr) n_{k_i} \\
    &\geq 2(\alpha - \varepsilon) \Bigl(\theta_j
      + \frac{\theta_j}{\theta_j + 2\varepsilon - 1} - 2\varepsilon
      \Bigr) n_{k_i} \\
    &\geq 2(\alpha - \varepsilon) \Bigl(\theta_j
      + \frac{\theta_j}{\theta_j - 1} - C_3\varepsilon \Bigr) n_{k_i},
  \end{split}
\end{equation}
for some constant $C_3 > 0$ independent of $\theta_j$ and
$\varepsilon$.

In order to obtain a cover, we fix $n \geq 1$ and consider all 
$\underline{x} \in \mathcal{U}_j'(\alpha)$ for which there exists an $i$ such that
$n_{k_i}(\underline{x}) = n$. Of these $\underline{x}$ we consider those for which $k_i(\underline{x}) = m$
for a fixed $m \geq 1$. Notice that by \eqref{eq:upper:k:lower} and
\eqref{eq:upper:ki:upper} we require that
$C_1 \log n \leq m \leq C_2 \log n$. Finally, we consider all the
different ways of arranging the first $m = k_i(\underline{x})$ fixed blocks, which
is given by a function $\sigma$ such that $n_k(\underline{x}) = \sigma(k)$. Since
$n_k(\underline{x})$ is increasing in $k$, we have that $\sigma \colon
\{1,\dotsc,m\} \to \{1,\dotsc,n\}$ is an injective and increasing
function. Let $V(n,m,\sigma)$ be the collection of
$\underline{x} \in \mathcal{U}_j'(\alpha)$ such that there exists an $i$ for which
$n_{k_i}(\underline{x}) = n$ and such that $k_i(\underline{x}) = m$, and $n_k(\underline{x}) = \sigma(k)$
for all $k = 1,\dotsc,k_i(\underline{x})$. Using this notation, we obtain a cover
of $\mathcal{U}_j'(\alpha)$ for every $N \geq 1$:
\begin{equation}\label{eq:upper:union}
  \mathcal{U}_j'(\alpha) \subset \bigcup_{n = N}^{\infty}
  \bigcup_{m = C_1 \log n}^{C_2 \log n} \bigcup_{\sigma} V(n,m,\sigma).
\end{equation}
Using cylinder sets, we cover each $V(n,m,\sigma)$ up to and including
the $m$-th fixed block. Notice that if $\underline{x} \in V(n,m,\sigma)$, then the
endpoint of the $m$-th fixed block is at most
\[
  n_{k_i}(\underline{x}) + (l_{k_i}(\underline{x}) - 1)/2
  \leq (1 + \alpha\theta(\underline{x}) + \varepsilon) n_{k_i}(\underline{x})
  \leq (1 + \alpha\theta_j + 2\varepsilon) n.
\]
Thus, we cover $V(n,m,\sigma)$ by cylinder sets of the form
$C_{(1 + \alpha\theta_j + 2\varepsilon) n}(\underline{y})$, where $\underline{y} \in V(n,m,\sigma)$. The number of $\underline{y}$ necessary is given by the
number of free digits in the block starting from
$-(1 + \alpha\theta_j + 2\varepsilon) n$ and ending at
$(1 + \alpha\theta_j + 2\varepsilon) n$. By \eqref{eq:upper:right:2},
this quantity is at most
\[
  P(n) := 1 + 2(1 + \alpha\theta_j + 2\varepsilon)n - 2(\alpha
  - \varepsilon) \Bigl(\theta_j + \frac{\theta_j}{\theta_j - 1}
  - C_3\varepsilon \Bigr) n,
\]
provided that $N \leq n$ is sufficiently large. There are at most
$n^{m} \leq n^{C_2\log n}$ increasing functions
$\sigma \colon \{1,\dotsc,m\} \to \{1,\dotsc,n\}$. Thus the total number of
cylinder sets of the form $C_{(1 +\alpha\theta_j +2\varepsilon) n}(\underline{y})$
in the union \eqref{eq:upper:union}, which (with a standard combinatorial argument) corresponds to the number of ways we can choose a block of length equal to the total length of free positions we have, i.e.\ to $\mathcal{A}_{P(n)}$, is at most
\begin{equation*}
(C_2 \log n) \cdot n^{C_2\log n} \cdot \#\mathcal{A}_{P(n)}
\end{equation*}
By Corollary \ref{Corollary admissible blocks are almost lambda to the n}, for all $\delta>0$, and for all sufficiently large $n$ (depending on $\delta$), we have that 
\[
\#\mathcal{A}_{P(n)} \leq \lambda^{(1+\delta)P(n)}
\]
Therefore, the number of cylinders and for sufficiently large $n$, is at most
\begin{equation}\label{eq:upper:number}
(C_2 \log n) \cdot n^{C_2\log n} \cdot \lambda^{(1+\delta)P(n)}.
\end{equation}
At the same time, the diameter of the cylinder sets
$C_{(1 + \alpha\theta_j + 2\varepsilon) n}(\underline{y})$ is
$\lambda^{-(1 + \alpha\theta_j + 2\varepsilon) n}$, and so a standard
covering argument leaves us with finding $s_0 > 0$ for which the sum
\[
  \sum_{n=N}^{\infty} (C_2 \log n) \cdot n^{C_2\log n} \cdot \lambda^{(1+\delta)P(n)}
  \cdot \lambda^{-(1 + \alpha\theta_j + 2\varepsilon) n s}
\]
converges for all $s > s_0$. This $s_0$ is given by
\begin{equation}\label{eq:upper:right:s}
  \begin{split}
    s_0 = s_0(\delta) &= (1+\delta) \cdot \frac{2(1 + \alpha \theta_j - \varepsilon) - 2(\alpha
      - \varepsilon)(\theta_j + \frac{\theta_j}{\theta_j - 1}
      - C_3\varepsilon)}{1 + \alpha\theta_j + 2\varepsilon} \\
    &\leq 2 (1+\delta) \cdot \frac{(1-\alpha)\theta_j - 1 + C'\varepsilon}{(1
      + \alpha\theta_j + 2\varepsilon)(\theta_j - 1)} \\
    &\leq 2 (1+\delta)  \cdot \bigg(\frac{(1-\alpha)\theta_j - 1}{(1 + \alpha\theta_j)
      (\theta_j - 1)} + C\varepsilon \bigg),
  \end{split}
\end{equation}
where $C, C' > 0$ are constants independent of $\theta_j$ and
$\varepsilon$. Thus,
\[
  \dim_{H} \big(  \mathcal{U}_j'(\alpha) \big) \leq 2 (1+\delta) \cdot \frac{(1-\alpha)\theta_j - 1}
  {(1 + \alpha\theta_j)(\theta_j - 1)} + C\varepsilon
\]
and
\[
  \begin{split}
    \dim_{H} \big( \mathcal{U}'(\alpha) \big)
    &\leq \max_{1 \leq j \leq J} \big(\dim_{H} \big( \mathcal{U}_j'(\alpha) \big) \big) \\
    &\leq 2 (1+\delta) \max_{1 \leq j \leq J} \biggl(\frac{(1 - \alpha)\theta_j
      - 1}{(1 + \alpha\theta_j)(\theta_j - 1)}\biggr) + C\varepsilon\\
    &\leq 2 (1+\delta) \sup_\theta \biggl(\frac{(1 - \alpha)\theta - 1}{(1
      + \alpha\theta) (\theta - 1)}\biggr) + C\varepsilon,
  \end{split}
\]
where the supremum in the final line is taken over
$\theta \in [(1-2\alpha)^{-1}, \alpha^{-1}]$.
The supremum is attained at $\theta = \frac{2}{1-\alpha}$ when
$\alpha \in (0,1/3]$. After substituting and letting
$\varepsilon \to 0$, we obtain for $\alpha \in (0,1/3]$ that
\begin{equation*}
  \dim_{H} \big( \mathcal{U}'(\alpha) \big)
  \leq 2(1+\delta) \biggl(\frac{1-\alpha}{1+\alpha}\biggr)^2
\end{equation*}
and that for any $\delta>0$. Therefore, by taking $\delta \to 0$ we have that
\begin{equation}\label{eq:upper:right:dim}
  \dim_{H} \big( \mathcal{U}'(\alpha) \big)
  \leq 2 \biggl(\frac{1-\alpha}{1+\alpha}\biggr)^2.
\end{equation}

%%%% Case 2
\emph{Case 2: covering $\mathcal{U}_j'(\alpha)$ by cylinder sets coinciding
with fixed blocks on the left.}
Recall from \eqref{eq:upper:block:left} that if two consecutive fixed blocks of $\underline{x} \in \mathcal{U}'(\alpha)$, say the $k$-th and $(k+1)$-th blocks overlap, then a fixed block
\[
  \left[ -\frac{l_{k+1}-1}{2}, \dotsc,
  n_k - n_{k+1} + \frac{l_{k+1}-1}{2} \right]
\]
appears on the left. We call this fixed block `the $k$-th fixed block
on the left' and denote its length by $s_k$ even if the fixed block may
be empty. The number of fixed digits in the block
$[ -(l_{k_i} - 1)/2,\dotsc, (l_{k_i} - 1)/2]$ is given by
\begin{equation}\label{eq:upper:left:fixed}
  \frac{l_{k_i} - 1}{2} + \sum_{k=1}^{k_i-1} s_k,
\end{equation}
which holds even if $s_k < 0$ for some $k$.
Indeed, the case $s_k < 0$ for some $k$ accounts for under counting the
free letters on the right, since $s_k < 0$ corresponds to the $k$-th
and $(k+1)$-th fixed blocks (on the right) not overlapping and leaving
in between them a free block of length $|s_k|$.
Following the same procedure as in Case~1, we find a lower bound of the
sum. Using that
\[
  \begin{split}
    s_k &= n_k - n_{k+1} + (l_k -1)/2 + (l_{k+1}-1)/2 \\
    &\geq n_k - n_{k+1} + (\alpha - \varepsilon)n_{k+1}
      + (\alpha - \varepsilon)n_{k+2} - 1 \\
    &= n_k - (1 - \alpha + \varepsilon)n_{k+1}
      + (\alpha - \varepsilon)n_{k+2} - 1,
  \end{split}
\]
where the second line follows from \eqref{eq:upper:length:k}, we obtain
that
\[
  \begin{split}
    \sum_{k=1}^{k_i-1} s_k
    &\geq \sum_{k=1}^{k_i-1} \bigl(n_k
      - (1 - \alpha + \varepsilon)n_{k+1} + (\alpha
      - \varepsilon)n_{k+2} - 1 \bigr) \\
    &= -k_i + 1 + n_1 + (\alpha - \varepsilon)n_2 - (1 - 2\alpha + 2\varepsilon)n_{k_i} + (\alpha - \varepsilon)n_{k_i+1} + 2(\alpha - \varepsilon) \sum_{k=3}^{k_i - 1} n_k.
  \end{split}
\]
Since $k_i \leq C_2\log n_{k_i}$ and
$n_{k_i + 1} \leq (\theta - \varepsilon)n_{k_i}$,
\begin{equation}\label{eq:upper:left:1}
  \sum_{k=1}^{k_i-1} s_k \ 
  \geq \ 
  -C_2\log n_{k_i} - (1 - 2\alpha
    + 2\varepsilon)n_{k_i} 
    + (\alpha - \varepsilon)(\theta - \varepsilon)n_{k_i}
    + 2(\alpha - \varepsilon) \sum_{k=3}^{k_i - 1} n_k
\end{equation}
Using \eqref{eq:upper:right:1},
\[
  \begin{split}
    \sum_{k=3}^{k_i - 1} n_k
    &\geq n_{k_i - 1} \biggl( \frac{1 - (\theta
      + \varepsilon)^{-k_i+3}}{1-(\theta + \varepsilon)^{-1}} \biggr)\\
    &\geq n_{k_i} (\theta + \varepsilon)^{-1} \biggl( \frac{1 - (\theta
      + \varepsilon)^{-k_i+3}}{1-(\theta + \varepsilon)^{-1}} \biggr)\\
    &= n_{k_i} \biggr( \frac{1 - (\theta
      + \varepsilon)^{-k_i+3}}{\theta + \varepsilon - 1} \biggr) \\
    &\geq n_{k_i} \biggl( \frac{1 - (\theta
      + \varepsilon)^{-C_1\log n_{k_i}+3}}{\theta + \varepsilon - 1}
      \biggr),
  \end{split}
\]
where in the second line we have used \eqref{eq:upper:midpoint:i} and
in the final line we have used \eqref{eq:upper:k:lower}. Thus, using
\eqref{eq:upper:left:1}, we obtain for large $n_{k_i}$ that
\begin{equation*}
  \sum_{k=1}^{k_i - 1} s_k \ 
  \geq \ 
  -C_2\log n_{k_i}
    - (1 - 2\alpha + 2\varepsilon)n_{k_i}
    + (\alpha - \varepsilon)(\theta - \varepsilon)n_{k_i}
  + 2(\alpha - \varepsilon) \biggl( \frac{1}{\theta + \varepsilon - 1}
  - \varepsilon \biggr) n_{k_i}.
\end{equation*}
As a result, the number of fixed digits, i.e., the quantity
\eqref{eq:upper:left:fixed}, is at least
\begin{multline}\label{eq:upper:left:2}
  (\alpha\theta - \varepsilon) n_{k_i} - \frac{1}{2} - C_2\log n_{k_i}
  - (1 - 2\alpha + 2\varepsilon)n_{k_i} \\
  + (\alpha - \varepsilon)(\theta - \varepsilon)n_{k_i}
  + 2(\alpha - \varepsilon) \biggl( \frac{1}{\theta + \varepsilon - 1}
  - \varepsilon \biggr) n_{k_i},
\end{multline}
using \eqref{eq:upper:length:i}.

Just as before, we obtain the cover
\[
  \mathcal{U}_j'(\alpha) \subset \bigcup_{n = N}^{\infty}
  \bigcup_{m = C_1 \log n}^{C_2 \log n} \bigcup_{\sigma} V(n,m,\sigma),
\]
where $V(n,m,\sigma)$ is the collection of $\underline{x} \in \mathcal{U}_j'(\alpha)$
such that there exists an $i$ for which $n_{k_i}(\underline{x}) = n$ and such that
$k_i(\underline{x}) = m$, and $n_k(\underline{x}) = \sigma(k)$ for all $k = 1,\dotsc,k_i(\underline{x})$.
Using cylinder sets, we cover each $V(n,m,\sigma)$ up to and including
the $(m-1)$-th fixed block on the left. Since the endpoint of the
$(m-1)$-th left fixed block of $\underline{x} \in V(n,m,\sigma)$ satisfies
\[
  -\frac{l_{k_i}(\underline{x}) - 1}{2}
  \geq -(\alpha\theta(\underline{x}) + \varepsilon)n_{k_i}(\underline{x})
  \geq -(\alpha\theta_j + 2\varepsilon)n,
\]
we cover each $V(n,m,\sigma)$ by cylinder sets of the form
$C_{(\alpha\theta_j + 2\varepsilon)n}(\underline{y})$, where $\underline{y} \in V(n,m,\sigma)$.
The number of $\underline{y}$ necessary is given by the number of free digits in
the block
$[-(\alpha\theta_j +2\varepsilon)n,\dotsc, (\alpha\theta_j
+ 2\varepsilon)n]$, which by \eqref{eq:upper:left:2} and
$\theta \in [\theta_j, \theta_j + \varepsilon]$, is at most
\begin{equation*}
  1 + (\alpha\theta_j + 5\varepsilon)n + C_2\log n
  + (1 - 2\alpha + 2\varepsilon) n 
  - (\alpha - \varepsilon)(\theta_j - \varepsilon) n
  - 2(\alpha - \varepsilon) \biggl(
  \frac{1}{\theta_j + 2\varepsilon - 1} - \varepsilon \biggr)n.
\end{equation*}
We can bound this quantity above by
\[
  Q(n) := 1 +  \Bigl(1 - 2\alpha - \frac{2\alpha}{\theta_j
  + 2\varepsilon - 1} + C_4 \varepsilon \Bigr)n
\]
for some constant $C_4 > 0$ independent of $\theta_j$ and
$\varepsilon$. Arguing as in equation \eqref{eq:upper:number} (through \eqref{eq:upper:right:dim})
we conclude that the total number of cylinder
sets of the form $C_{(\alpha\theta_j + 2\varepsilon)n}(\underline{y})$ in the union
\eqref{eq:upper:union} is, essentially, at most 
\begin{equation} \label{eq:upper:number2}
  (C_2 \log n) \cdot n^{C_2\log n} \cdot \lambda^{Q(n)}.
\end{equation}

The diameter of the cylinder sets
$C_{(\alpha\theta_j + 2\varepsilon)n}(\underline{y})$ is 
$\lambda^{-(\alpha\theta_j + 2\varepsilon)n}$; therefore, the same
covering argument as in Case~1 leaves us with finding $s_0 > 0$ for
which the sum
\[
  \sum_{n = N}^{\infty} (C_2 \log n) \cdot n^{C_2\log n}
  \cdot \lambda^{Q(n)} \cdot \lambda^{-(\alpha\theta_j + 2\varepsilon)ns}
\]
converges for all $s > s_0$. This $s_0$ is given by
\[
  \begin{split}
    s_0 &= \frac{1 - 2\alpha - \frac{2\alpha}{\theta_j + 2\varepsilon
      - 1} + C_4\varepsilon}{\alpha\theta_j + 2\varepsilon} \\
    &\leq \frac{(1-2\alpha)\theta_j - 1 + C'\varepsilon}
      {(\alpha\theta_j + 2\varepsilon)(\theta_j + 2\varepsilon - 1)} \\
    &\leq \frac{(1-2\alpha)\theta_j - 1}{\alpha\theta_j(\theta_j - 1)}
      + C\varepsilon,
  \end{split}
\]
where $C, C' > 0$ are constants independent of $\theta_j$ and
$\varepsilon$. Hence,
\[
  \begin{split}
    \dim_{H} \big( \mathcal{U}'(\alpha) \big)
    &\leq \max_{1\leq j \leq J} \big(\dim_{H} \big( \mathcal{U}_j'(\alpha)\big) \big) \\
    &\leq \max_{1\leq j \leq J} \biggl( \frac{(1-2\alpha)\theta_j
      - 1}{\alpha\theta_j (\theta_j - 1)} \biggr) + C\varepsilon \\
    &\leq \sup_\theta \biggl( \frac{(1-2\alpha)\theta - 1}{\alpha\theta
      (\theta - 1)} \biggr) + C\varepsilon,
  \end{split}
\]
where the supremum is taken over
$\theta \in [(1-2\alpha)^{-1}, \alpha^{-1}]$.
If $\alpha \leq 2 - \sqrt{3}$, then the supremum is attained when
$\theta = \frac{1 + \sqrt{2\alpha}}{1 + 2\alpha}$, and if
$\alpha > 2 - \sqrt{3}$, then the supremum is attained when
$\theta = \alpha^{-1}$. After substituting and letting
$\varepsilon \to 0$, we obtain that
\begin{equation}\label{eq:upper:left:dim}
  \dim_{H} \big( \mathcal{U}'(\alpha) \big) \leq
  \begin{cases}
    \frac{(1-\sqrt{2\alpha})^2}{\alpha}
    & \text{if $\alpha \leq 2-\sqrt{3}$}, \\
    \frac{1-3\alpha}{1-\alpha}
    & \text{if $2-\sqrt{3} < \alpha \leq 1/3$}.
  \end{cases}
\end{equation}

Combining \eqref{eq:upper:right:dim} and \eqref{eq:upper:left:dim} and
choosing the optimal upper bound for each $\alpha$ concludes the proof.
\end{proof}

\section{Cardinality}
  Since $\mathcal{U} \left(\frac{1}{3}\right) \subset \mathbb{T}^2$ and the
cardinality of $\mathbb{T}^2$ is the continuum, it is enough to
identify a subset $A \subset \mathcal{U} \left(\frac{1}{3}\right)$ such that the
cardinality of $A$ is the continuum.  We will do this by
finding a set $B \subset \mathcal{U}' \left(\frac{1}{3}\right)$ such that
$B$ is a continuum and $A = \pi (B)$ is a continuum as well.

Let $\delta$ be an arbitrary natural number. The goal is to
construct for $\mathcal{U}' \left(\frac{1}{3}\right)$, a sequence of fixed
blocks with centres $n_k$, such that the free blocks on the left
are exactly of length $\delta-1$. This is done as follows.

Let $n_1 \in \mathbb{N}$ and define recursively,
$n_{k+1} = 3 (n_k + \delta)$. Then $\frac{1}{3} n_k \in
\mathbb{N}$ for all $k > 1$.
Around each $n_k$, we declare that the block
$[n_k - \frac{1}{3} n_{k+1}, \ldots, n_k + \frac{1}{3} n_{k+1}]
= [-\delta, \ldots, 2 n_k + \delta]$ is non-free. We let $B$ be
the set of sequences with such fixed blocks. By
construction, $B \subset \mathcal{U}' \left(\frac{1}{3}\right)$.

The fixed blocks on the right clearly overlap and cover
$[-\delta, \ldots]$. They therefore give rise to fixed
blocks on the left. The overlap of the block around $n_k$ and
$n_{k+1}$ is simply the block around $n_k$. Shifted to the
left, it is the block
\[
[-\delta - n_{k+1}, 2 n_k + \delta - n_{k+1}] = [- \delta -
n_{k+1} , - 2 \delta - n_k].
\]
It follows that the separation (number of free digits) between
two consecutive blocks on the left is
\[
-\delta - n_{k+1} - (-2\delta - n_{k+1}) - 1 = \delta - 1.
\]

The free digits are free only to the extent that they need to
fit together with the fixed blocks according to the rules of
the subshift. By taking $\delta$ sufficiently large, there will
be actual freedom in the countably many free blocks on the left
and this implies that the set $B$ is uncountable.
Finally, it is clear that $A = \pi(B)$ is uncountable as well.

\section{Dimensional relation between the manifold and the coding space} \label{Section  dim manifold = dim coding space - isomorphism}
The main result of this section is to show that there is no difference in considering the original set in the manifold or its counterpart in the coding space since we can retrieve the same dimensional results regardless. We remind that (without loss of generality), $A$ is a hyperbolic, integer matrix with $\det A=1$ and $\lambda>1$ denotes the largest eigenvalue of $A$. Furthermore, $T \colon \mathbb{T}^2 \to \mathbb{T}^2$ is defined by $T(x) = Ax \pmod{1}$, $x\in \mathbb{T}^2$. We also remind here the definitions of the two sets $\mathcal{U}(\alpha)$ and $\mathcal{U}'(\alpha)$:
\begin{align*}
&\mathcal{U}(\alpha) = \left\{x\in \mathbb{T}^2: \ \exists M=M(x) \geq 1 \text{ such that } \forall N\geq M, \ \exists n\leq N \text{ such that } d_{\mathbb{T}^2}(T^nx, x) \leq \lambda^{-\alpha N} \right\}
\\&
\mathcal{U}'(\alpha) = \left\{x\in \Sigma_{\Gamma}: \ \exists M=M(x) \geq 1 \text{ such that } \forall N\geq M, \ \exists n\leq N \text{ such that } d_{\Sigma}(\sigma^nx, x) \leq \lambda^{-\alpha N} \right\}
\end{align*}
Furthermore, we remind that we consider the shift space $\Sigma$ endowed with the metric 
\[
d_{\Sigma}(\underline{x}, \underline{y}) = \lambda^{-k(\underline{x}, \underline{y})}
\]
where $k(\underline{x}, \underline{y}) = \max \{n\in \mathbb{N}: \ x_{i}=y_{i}, \text{ for all } |i|\leq n \}$. 
Let $(X,\rho)$ be a metric space and $Y \subseteq X$. Throughout the rest of this part, by $|Y|$ we denote the diameter of the set $Y$.

\begin{remark}
In what follows, it will be showcased the reason for which we chose this specific metric, as it will give us the correct metrical correspondence between the manifold and the shift space, in order to acquire the following result in Proposition \ref{Proposition sufficient to calculate dim for the respective set in the shift space}.
\end{remark}

\begin{proposition} \label{Proposition sufficient to calculate dim for the respective set in the shift space}
Let $\alpha \geq 0$ and consider the two sets $\mathcal{U}(\alpha) \subset \mathbb{T}^2$ and $\mathcal{U}'(\alpha) \subset \Sigma_{\Gamma}$. Then 
\[
\dim_{H} \big(\mathcal{U}(\alpha)\big)
=
\dim_{H} \big(\mathcal{U}'(\alpha)\big) .
\]
\end{proposition}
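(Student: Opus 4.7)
The plan is to exploit the coding map $\pi \colon \Sigma_\Gamma \to \mathbb{T}^2$, which conjugates $(\Sigma_\Gamma, \sigma)$ with $(\mathbb{T}^2, T)$, and to show that with the chosen metric $d_\Sigma(\underline{x}, \underline{y}) = \lambda^{-k(\underline{x}, \underline{y})}$ this map becomes a bi-Lipschitz homeomorphism outside a one-dimensional boundary set. This should immediately transfer the dimension between the two recurrence sets.

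First I would establish a metric comparison: for every $\underline{x}, \underline{y} \in \Sigma_\Gamma$ with $k(\underline{x}, \underline{y}) = n$, both $\pi(\underline{x})$ and $\pi(\underline{y})$ lie in the Markov rectangle $R_n(\underline{x}) := \bigcap_{|j| \le n} T^{-j}(P_{x_j})$. Since the partition elements are parallelograms of bounded size with sides along the eigendirections of $A$, and since $T$ expands the unstable direction by $\lambda$ and contracts the stable one by $\lambda^{-1}$, the set $R_n(\underline{x})$ has unstable and stable widths both comparable to $\lambda^{-n}$. Hence $d_{\mathbb{T}^2}(\pi(\underline{x}), \pi(\underline{y})) \le C\lambda^{-n} = C\, d_\Sigma(\underline{x}, \underline{y})$ for some absolute constant $C$. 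Conversely, if $\underline{x}, \underline{y}$ first disagree at coordinate $\pm(n+1)$, then $\pi(\underline{x})$ and $\pi(\underline{y})$ lie in distinct Markov rectangles of level $n+1$ with disjoint interiors, so, provided both points are interior to their rectangles, $d_{\mathbb{T}^2}(\pi(\underline{x}), \pi(\underline{y})) \ge c\, d_\Sigma(\underline{x}, \underline{y})$ for some $c > 0$. Setting $B := \bigcup_{n \in \mathbb{Z}} T^n(\partial \mathcal{P})$, the restriction of $\pi$ to $\Sigma_\Gamma \setminus \pi^{-1}(B)$ is therefore a bi-Lipschitz bijection onto $\mathbb{T}^2 \setminus B$.

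Next I would translate the recurrence conditions through $\pi$ using $\pi \circ \sigma = T \circ \pi$. If $\underline{x} \in \mathcal{U}'(\alpha)$, then for every $\varepsilon > 0$ and every sufficiently large $N$, the existence of $n \le N$ with $d_\Sigma(\sigma^n \underline{x}, \underline{x}) \le \lambda^{-\alpha N}$ yields $d_{\mathbb{T}^2}(T^n \pi(\underline{x}), \pi(\underline{x})) \le C\lambda^{-\alpha N} \le \lambda^{-(\alpha - \varepsilon) N}$, so $\pi(\mathcal{U}'(\alpha)) \subseteq \mathcal{U}(\alpha - \varepsilon)$. Conversely, if $x \in \mathcal{U}(\alpha) \setminus B$ has unique preimage $\underline{x} = \pi^{-1}(x)$, then $d_{\mathbb{T}^2}(T^n x, x) \le \lambda^{-\alpha N}$ forces $d_\Sigma(\sigma^n \underline{x}, \underline{x}) \le c^{-1}\lambda^{-\alpha N} \le \lambda^{-(\alpha - \varepsilon) N}$, giving $\underline{x} \in \mathcal{U}'(\alpha - \varepsilon)$.

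Since $\pi$ is bi-Lipschitz on the complement of $\pi^{-1}(B)$, it preserves Hausdorff dimension there, and combining the inclusions while letting $\varepsilon \to 0$ (using the continuity of the explicit formula in Theorem~\ref{Theorem main result - isomorphism}) yields $\dim_H \mathcal{U}'(\alpha) = \dim_H \mathcal{U}(\alpha)$ up to a boundary contribution. The main obstacle is controlling this boundary: $B$ is a countable union of arcs of stable and unstable manifolds, hence has Hausdorff dimension at most one, yet for $\alpha$ close to $1/3$ the dimension of $\mathcal{U}(\alpha)$ itself drops strictly below one. I would therefore need to argue that $\mathcal{U}(\alpha) \cap B$ does not exceed the target dimension, either by observing that orbits meeting $\partial \mathcal{P}$ correspond to sequences with restricted ``one-sided'' behaviour (reducing the recurrence analysis on $B$ to a simpler setting that still satisfies the same bounds) or by invoking that $B$ decomposes into countably many $T$-invariant curves on each of which the recurrence set can be analysed directly and shown to contribute no more than what $\pi(\mathcal{U}'(\alpha))$ already provides.
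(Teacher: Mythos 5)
Your forward (Lipschitz) direction is fine and matches the paper, but the central claim that $\pi$ is bi-Lipschitz on $\Sigma_\Gamma \setminus \pi^{-1}(B)$ is false, and it fails exactly where you need it. If $\underline{x},\underline{y}$ first disagree at a coordinate of modulus $n+1$, then $\pi(\underline{x})$ and $\pi(\underline{y})$ do lie in different level-$(n+1)$ Markov rectangles, but these rectangles can be \emph{adjacent}: two points interior to adjacent rectangles can both lie arbitrarily close to the common boundary piece, hence be arbitrarily close on $\mathbb{T}^2$, while $d_\Sigma(\underline{x},\underline{y})=\lambda^{-n}$. Removing the set $B=\bigcup_n T^n(\partial\mathcal{P})$ does not help, because points off $B$ can still be arbitrarily close to $B$; no uniform constant $c>0$ exists. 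Consequently your reverse inclusion ``$x\in\mathcal{U}(\alpha)\setminus B \Rightarrow \pi^{-1}(x)\in\mathcal{U}'(\alpha-\varepsilon)$'' breaks down: $T^n x$ can be $\lambda^{-\alpha N}$-close to $x$ on the torus while landing in a neighbouring rectangle, so the symbolic orbit need not return close at time $n$ at all. This is not a defect confined to the one-dimensional set $B$ (which is the only obstruction you flag); it concerns every orbit that approaches $\partial\mathcal{P}$ at its recurrence times, so analysing $\mathcal{U}(\alpha)\cap B$ separately cannot repair the argument.

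The paper avoids inverting $\pi$ altogether. It proves $\dim_H(E)=\dim_H(\pi(E))$ for \emph{every} $E\subset\Sigma_\Gamma$ by comparing covers: symmetric cylinders and their image rectangles have comparable diameters, cylinders suffice for Hausdorff dimension in $\Sigma$, and every ball in $\mathbb{T}^2$ is covered by a bounded number of comparably sized rectangles. For $\dim_H\pi^{-1}(\mathcal{U}(\alpha))\ge\dim_H\mathcal{U}'(\alpha)$ it uses only the Lipschitz direction together with the observation that multiplying the recurrence rate by a constant does not change the dimension. For the delicate inequality $\dim_H\pi^{-1}(\mathcal{U}(\alpha))\le\dim_H\mathcal{U}'(\alpha)$ it reruns the upper-bound covering argument of Section 4: the condition $d_{\mathbb{T}^2}(T^n x,x)<\lambda^{-\alpha N}$ forces $T^n x$ into at most $K_0$ Markov rectangles of level $\sim\alpha N$ near $x$, so each ``fixed block'' of the coding is determined up to $K_0$ choices; with $O(\log n)$ blocks this multiplies the cylinder count by $K_0^{O(\log n)}$, a subexponential factor that leaves the critical exponent, hence the dimension bound, unchanged. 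Some counting device of this kind (rather than a pointwise conjugacy off a small set) is what your proposal is missing.
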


Firstly, in the following result, we show that the pull back via the coding map $\pi$ of a set in the coding space does not alter its Hausdorff dimension. 
\begin{proposition} \label{Proposition dim of E in shift space  equals  dim of p(E) in manifold through coding map}
Consider $A$ to be a hyperbolic, area preserving matrix with
integer entries and the system $T\colon x\mapsto Ax \pmod{1} \colon \mathbb{T}^2 \to \mathbb{T}^2$. Furthermore let $\Sigma_{\Gamma}$ be the subshift of finite type that encodes this system and $\pi$ the respective coding map. Then
\[
\dim_{H} \big(E \big) =  \dim_{H} \big(\pi (E) \big)
\]
for all $E \subset \Sigma_{\Gamma}$.
\end{proposition}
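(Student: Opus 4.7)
The plan is to prove the two inequalities separately by exploiting the geometric dictionary between cylinders in $\Sigma_\Gamma$ and rectangles in $\mathbb{T}^2$ afforded by a Markov partition. First I would establish that $\pi$ is Lipschitz, which immediately gives $\dim_H(\pi(E)) \leq \dim_H(E)$. Then I would show that every subset of $\mathbb{T}^2$ of diameter $r$ has preimage covered by a uniformly bounded number of cylinders of $d_\Sigma$-diameter comparable to $r$, which yields the reverse inequality.

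For the Lipschitz direction, the crucial geometric observation is that for any admissible word, the image $\pi(C_{[x_{-n},\dotsc,x_n]})$ is a parallelogram with sides along the stable and unstable eigendirections. Iterating $T^n$ contracts the stable side by $\lambda^{-n}$, and iterating $T^{-n}$ contracts the unstable side by $\lambda^{-n}$, so this parallelogram has diameter at most $C \lambda^{-n}$ for a constant $C$ depending only on the Markov partition $\mathcal{P}$. Hence if $d_\Sigma(\underline{x},\underline{y}) = \lambda^{-k}$, then $\underline{x}$ and $\underline{y}$ lie in the common cylinder $C_{[x_{-k},\dotsc,x_k]}$, which gives $d_{\mathbb{T}^2}(\pi(\underline{x}),\pi(\underline{y})) \leq C \lambda^{-k} = C\, d_\Sigma(\underline{x},\underline{y})$. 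Since Lipschitz maps do not increase Hausdorff dimension, this furnishes one inequality.

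For the reverse direction, I would work directly with $\mathcal{H}^s$. Given any $\delta$-cover $\{U_i\}$ of $\pi(E)$ with diameters $r_i$, choose $n_i$ with $\lambda^{-n_i - 1} \leq r_i < \lambda^{-n_i}$. The cylinders of depth $n_i$ map onto parallelograms $\pi(C)$ which tile $\mathbb{T}^2$ (up to boundaries), each of diameter $\asymp \lambda^{-n_i}$, with bounded overlap multiplicity. Consequently, the number of depth-$n_i$ cylinders $C$ with $\pi(C)\cap U_i \neq \emptyset$ is bounded by a constant $K$ independent of $i$, and every $\underline{x} \in \pi^{-1}(U_i)$ lies in one such cylinder. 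Pulling back yields a $(\lambda\delta)$-cover of $E$ by at most $K$ cylinders per index, each of $d_\Sigma$-diameter at most $\lambda r_i$. Summing gives
\[
\sum_{i,j} |C_{i,j}|^s \leq K\lambda^s \sum_i r_i^s,
\]
and letting $\delta \to 0$ yields $\mathcal{H}^s(E) \leq K\lambda^s\, \mathcal{H}^s(\pi(E))$, hence $\dim_H(E) \leq \dim_H(\pi(E))$.

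The main obstacle is the bounded multiplicity claim: that a set of diameter $\lesssim \lambda^{-n}$ in $\mathbb{T}^2$ meets only a uniformly bounded number of the parallelograms $\pi(C)$ with $C$ a cylinder of depth $n$. This is where the specific choice of the metric $d_\Sigma(\underline{x},\underline{y}) = \lambda^{-k(\underline{x},\underline{y})}$ is essential, for it matches both sides of the parallelogram, so the depth-$n$ rectangles have roughly fixed shape (aspect ratio bounded away from $0$ and $\infty$) while being of size $\lambda^{-n}$ in each direction. With this comparability, a straightforward area-counting argument, using that two distinct depth-$n$ parallelograms intersect only in arcs of stable or unstable manifolds of one another, delivers the uniform bound $K$. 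All other technicalities (the coding being countable-to-one on preimages of partition boundaries, the countable nature of the `bad set', and so on) contribute only a set of dimension at most $1$ which plays no role in the bookkeeping above.
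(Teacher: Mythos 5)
Your proposal is correct and is essentially the paper's own argument: one inequality via the Lipschitz property of $\pi$ (equivalently, the comparable diameters of symmetric cylinders in $\Sigma_\Gamma$ and their parallelogram images, as in the paper's lemmas on the coding map and on cylinder diameters), and the other by covering a set of diameter $\asymp\lambda^{-n}$ in $\mathbb{T}^2$ with a uniformly bounded number of level-$n$ cylinders and pulling this cover back, which is exactly the paper's comparability lemma for balls and cylinders in $\mathbb{T}^2$ combined with the sufficiency of cylinder covers. There is no essential difference in approach.
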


The following result is well known and it can be found for example in \cite[Chapter 4]{Mattila}.
\begin{lemma}  \label{Lemma H. dimension with only considering open balls}
Let $X$ be a compact metric space and $Y\subset X$. 
Then it is sufficient to consider covers of open balls for $\dim_{H}(Y)$.
\end{lemma}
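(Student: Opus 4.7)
The plan is to show that the Hausdorff $s$-measure obtained by taking infima over arbitrary covers of $Y$, and the analogous quantity obtained by restricting to covers consisting only of open balls, are comparable up to a universal multiplicative constant depending only on $s$. Once this is in place, both set functions jump from $+\infty$ to $0$ at the same critical exponent, and consequently $\dim_H(Y)$ is unchanged by the restriction to open ball covers.

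More concretely, I would write $\mathcal{H}^s(Y)$ for the usual Hausdorff $s$-measure and $\tilde{\mathcal{H}}^s(Y)$ for the analogue in which the admissible covers are required to consist of open balls only. The inequality $\mathcal{H}^s(Y) \leq \tilde{\mathcal{H}}^s(Y)$ is then immediate, since every open-ball cover is a particular case of an arbitrary cover; the whole substance of the argument lies in the reverse direction.

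For the reverse direction I would invoke the standard inflation trick. Given any non-empty set $U$ of diameter strictly less than $\delta$, pick any $x \in U$ and observe that $U \subseteq B(x, 2|U|)$, which is an open ball of diameter at most $4|U|$. Applying this replacement to each member of an arbitrary $\delta$-cover $\{U_i\}_{i \in I}$ of $Y$ produces an open-ball $(4\delta)$-cover $\{B_i\}_{i \in I}$ satisfying $|B_i| \leq 4|U_i|$ for every $i$, and hence
\[
\sum_{i \in I} |B_i|^s \leq 4^s \sum_{i \in I} |U_i|^s.
\]
Taking the appropriate infima gives $\tilde{\mathcal{H}}^s_{4\delta}(Y) \leq 4^s \mathcal{H}^s_\delta(Y)$, and letting $\delta \to 0$ yields $\tilde{\mathcal{H}}^s(Y) \leq 4^s \mathcal{H}^s(Y)$. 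Combining both directions, $\mathcal{H}^s(Y) \leq \tilde{\mathcal{H}}^s(Y) \leq 4^s \mathcal{H}^s(Y)$, so the two notions share the same critical exponent.

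There is no real obstacle here: the entire argument reduces to the inflation estimate above, which is a two-line observation. Compactness of $X$ is not essential for the inflation step itself and enters only to align the statement with the standing setting of the paper; the conclusion in fact holds in any metric space.
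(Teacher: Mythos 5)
Your argument is correct, and it is exactly the standard comparison between Hausdorff measure and its ``spherical'' variant: the paper does not prove this lemma at all but simply cites Mattila (Chapter 4), where the same inflation estimate is used, so your proposal supplies a self-contained proof of what the paper delegates to the literature. Two small remarks. First, the replacement $U \subseteq B(x,2|U|)$ breaks down when $|U|=0$ (a singleton would be ``covered'' by an empty ball); the standard patch is to inflate each $U_i$ to a ball of radius $|U_i|+\epsilon_i$ with $\sum_i (2\epsilon_i)^s < \epsilon$, which gives $\tilde{\mathcal{H}}^s_{\delta'}(Y) \leq 2^s\mathcal{H}^s_{\delta}(Y) + \epsilon$ and then the same conclusion after letting $\epsilon, \delta \to 0$; note this also improves your constant from $4^s$ to $2^s$, though any fixed constant suffices since, as you say, only the common critical exponent matters. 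Second, you are right that compactness of $X$ plays no role here; it is only part of the standing hypotheses of the paper, and in fact the identity of the two critical exponents holds in an arbitrary metric space.
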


\begin{lemma} \label{Lemma balls=symm.cylinders in Sigma}
In the two-sided shift space $(\Sigma, d_{\Sigma})$, the open balls are exactly the symmetric cylinders $C_{[x_{-n}, \ldots , 0, \ldots , x_n]}$.
\end{lemma}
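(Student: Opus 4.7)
The plan is to verify the two inclusions directly, exploiting the fact that the metric $d_{\Sigma}$ only takes values in the discrete set $\{0\} \cup \{\lambda^{-n} : n \geq 0\}$, so that ``how close'' two points are is entirely controlled by the integer-valued quantity $k(\underline{x},\underline{y})$. For the forward direction, I would fix $\underline{x}\in\Sigma$ and a radius $r\in(0,1]$ (the case $r>1$ is the trivial one where the ball is all of $\Sigma$), and define $n=n(r)$ to be the unique nonnegative integer with $\lambda^{-n} < r \leq \lambda^{-n+1}$. Since $k$ takes integer values, the strict inequality $d_{\Sigma}(\underline{x},\underline{y}) < r$ is then equivalent to $k(\underline{x},\underline{y}) \geq n$, which by the very definition of $k$ is exactly the statement $y_i = x_i$ for all $|i|\leq n$. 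This is the defining condition of the symmetric cylinder $C_{[x_{-n},\ldots,x_n]}$, so the two sets coincide and $B(\underline{x},r) = C_{[x_{-n},\ldots,x_n]}$.

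For the converse, starting from a symmetric cylinder $C_{[x_{-n},\ldots,x_n]}$, I would fix any element $\underline{x}$ of the cylinder and run the same equivalence chain in reverse with, e.g., $r=\lambda^{-n+1}$, reproducing the cylinder as the open ball $B(\underline{x},r)$ centred at $\underline{x}$. The only mild subtlety---and the only point where one has to be even slightly careful---is to align the strict inequality in the definition of an open ball with the discreteness of the values of $d_{\Sigma}$: one must pick $r$ strictly greater than $\lambda^{-n}$, so that points with $k(\underline{x},\underline{y}) = n$ are included, yet at most $\lambda^{-n+1}$, so that points with $k(\underline{x},\underline{y}) = n-1$ are excluded. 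No deeper obstacle arises; the proof is essentially a bookkeeping exercise based on the explicit form of the metric.
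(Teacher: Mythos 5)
Your proof is correct and follows essentially the same route as the paper's: both directions are verified directly from the explicit form of $d_{\Sigma}$, using that its values lie in the discrete set $\{0\}\cup\{\lambda^{-k}\}$ so that the open-ball condition $d_{\Sigma}(\underline{x},\underline{y})<r$ is equivalent to a condition of the form $k(\underline{x},\underline{y})\geq n(r)$. If anything, your bracket $\lambda^{-n}<r\leq\lambda^{-n+1}$ handles the strict inequality in the definition of an open ball slightly more carefully than the radii chosen in the paper's own argument, which contains a harmless off-by-one in the level of the cylinder identified with a given ball.
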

\begin{proof}
	Obviously the symmetric cylinders, $C_{[x_{-n}, \ldots , 0, \ldots , x_n]}$, are open balls and more precisely, they are balls with centre any point $\underline{y}$ for which $x_{i}=y_{i}$ and $x_{-i}=y_{-i}$, for all $i=0,\ldots n$ and radius $\lambda^{-n}$. 
	
	Now let $B(\underline{x}, r)$ to be a ball of radius $r<1$. Then there exist an $n$ such that 
	\[
	\lambda^{-n-1} \leq r < \lambda^{-n}.
	\]
	This means that $\underline{y} \in B(\underline{x}, r)$ if and only if $\underline{y} \in C_{[x_{-n}, \ldots , 0, \ldots , x_n]}$ and that completes the proof.
\end{proof}

By combining Lemma~\ref{Lemma H. dimension with only considering open balls} and Lemma~\ref{Lemma balls=symm.cylinders in Sigma} we immediately get the following.
\begin{lemma} \label{Lemma for H. dimension in shift space, sufficient cylinders}
In $\Sigma$  one needs to consider only covers of symmetric cylinders in order to calculate the Hausdorff dimension of any subset of\/ $\Sigma$.
\end{lemma}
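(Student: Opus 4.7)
The plan is to derive this as an immediate consequence of the two preceding lemmas, exactly as the author indicates with the phrase ``we immediately get the following.'' First I would observe that the shift space $\Sigma$ (and hence any subshift of finite type $\Sigma_\Gamma \subset \Sigma$) endowed with the metric $d_\Sigma(\underline{x},\underline{y}) = \lambda^{-k(\underline{x},\underline{y})}$ is a compact metric space, as was recorded in Section~\ref{Subsection Shift spaces}. This places us inside the hypotheses of Lemma~\ref{Lemma H. dimension with only considering open balls}, so that for any $E \subset \Sigma$ the Hausdorff dimension $\dim_H(E)$ can be computed using only covers by open balls.

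Next, I would apply Lemma~\ref{Lemma balls=symm.cylinders in Sigma}, which identifies the open balls of $(\Sigma,d_\Sigma)$ precisely with the symmetric cylinders $C_{[x_{-n},\ldots,x_0,\ldots,x_n]}$. Substituting this identification into the conclusion of the previous step, every admissible cover of $E$ by open balls is already a cover by symmetric cylinders, and conversely every cover by symmetric cylinders is a cover by open balls whose diameters agree (up to the multiplicative constant built into the metric) with the parameter $\lambda^{-n}$. Consequently, the infima defining the Hausdorff measures $\mathcal{H}^s(E)$ are unchanged when restricted to covers by symmetric cylinders, which yields the stated lemma.

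There is no real obstacle here, since both ingredients are already in place: the compactness of $\Sigma$ is standard, and the characterisation of open balls as symmetric cylinders is proved in Lemma~\ref{Lemma balls=symm.cylinders in Sigma} by the explicit radius-to-level comparison $\lambda^{-n-1} \leq r < \lambda^{-n}$. The only point worth spelling out explicitly is that this comparison makes the two notions of ``$\delta$-cover'' cofinal in one another, so the two corresponding Hausdorff outer measures coincide, and therefore so do the resulting dimensions.
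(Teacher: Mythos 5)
Your proposal is correct and follows exactly the paper's route: the paper derives this lemma immediately by combining Lemma~\ref{Lemma H. dimension with only considering open balls} (covers by open balls suffice in a compact metric space) with Lemma~\ref{Lemma balls=symm.cylinders in Sigma} (open balls in $(\Sigma,d_\Sigma)$ are precisely the symmetric cylinders), just as you do. Your extra remark about the cofinality of the corresponding $\delta$-covers is a harmless elaboration of the same argument.
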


For notation convenience, by $C_n$ we mean the symmetric cylinders $C_{[x_{-n}, \ldots , 0, \ldots , x_n]}$. These cylinders can be represented also as
\[
\bigcap_{i=-n}^{n} \sigma^{i}(C_{x_i})
\]
where $x_i$ is some letter from the alphabet and $C_{x_i}$ is the cylinder which contains all the elements which have $x_i$ at the $0$-position.
By $\widetilde{C}_n$ we mean the respective "symmetric" cylinders in the manifold,
\[
\widetilde{C}_n := \bigcap_{i=-n}^{n} T^{i}(X_{j_i})
\]
where $X_{j_i}$ is some element of the Markov partition and $T \colon \mathbb{T}^2 \to \mathbb{T}^2$ with $T(x) = Ax \pmod{1}$, for any $x\in \mathbb{T}^2$.
Observe that through the coding, $x_i = j_i
$ and
\[
\widetilde{C}_n = \pi(C_n).
\]  
We call these cylinders, \textit{cylinders of level $n$} or \textit{$n$-level cylinders}.

\begin{lemma} \label{Lemma cylinders in shift space and in manifold are of comparable diameter}
There exists a $c>1$ such that, for all $n\in \mathbb{N}$ and all cylinders $C_n \subset \Sigma$ and $\widetilde{C}_n \subset \mathbb{T}^2$
\[
c^{-1} |\widetilde{C}_n| \ \leq \ |C_n| \ \leq \ c |\widetilde{C}_n|.
\] 
\end{lemma}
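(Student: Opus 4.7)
The plan is to compute both diameters explicitly (up to multiplicative constants) and show that each is comparable to $\lambda^{-n}$, with constants that depend only on $A$ and on the chosen Markov partition, not on $n$ or on which cylinder we look at.

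First I would handle the easier side. By the definition of $d_{\Sigma}$, any two $\underline{x}, \underline{y} \in C_n$ agree on positions $|i| \leq n$, so $k(\underline{x},\underline{y}) \geq n$ and hence $d_{\Sigma}(\underline{x},\underline{y}) \leq \lambda^{-n}$. Conversely, one can always choose $\underline{x}, \underline{y} \in C_n$ that disagree at position $n+1$, giving $d_{\Sigma}(\underline{x},\underline{y}) = \lambda^{-n}$ and therefore $|C_n| = \lambda^{-n}$ exactly. So the entire content of the lemma is the estimate $|\widetilde C_n| \asymp \lambda^{-n}$.

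For $\widetilde C_n$, I would use Snavely's construction: each Markov partition element $P_j$ is a parallelogram whose sides are parallel to the stable and unstable eigendirections of $A$, with stable side length $s_j$ and unstable side length $u_j$ bounded above and below by positive constants, uniformly in $j$. Using property~(3) in the definition of a Markov partition, I would argue by induction on $n$ that the one-sided intersection
\[
\widetilde C_n^+ := \bigcap_{i=0}^{n} T^i(P_{x_i})
\]
is itself a parallelogram inside $P_{x_0}$, which still spans the full unstable extent of $P_{x_0}$ but whose stable extent has length in $[c_0 \lambda^{-n}, c_1 \lambda^{-n}]$ for constants $0 < c_0 < c_1$ depending only on the partition. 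This is because passing from $n$ to $n+1$ intersects $\widetilde C_n^+$ with $T^{n+1}(P_{x_{n+1}})$, which under the Markov property amounts to cutting $\widetilde C_n^+$ by an unstable strip whose stable width is exactly $\lambda^{-1}$ times the stable width of $P_{x_{n+1}}$. A symmetric argument handles $\widetilde C_n^- := \bigcap_{i=-n}^{0} T^i(P_{x_i})$, which turns out to be a parallelogram in $P_{x_0}$ of full stable extent and unstable extent in $[c_0\lambda^{-n}, c_1 \lambda^{-n}]$.

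Intersecting, $\widetilde C_n = \widetilde C_n^+ \cap \widetilde C_n^-$ is a parallelogram inside $P_{x_0}$ with both side lengths in $[c_0\lambda^{-n}, c_1\lambda^{-n}]$. Because the angle between the stable and unstable eigendirections of $A$ is a fixed positive constant, the Euclidean diameter of such a parallelogram is comparable to the maximum of the two side lengths; thus $|\widetilde C_n| \asymp \lambda^{-n}$ with constants depending only on $A$ and on the partition. Combined with $|C_n| = \lambda^{-n}$, this furnishes the claimed constant $c$.

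The main obstacle is the inductive geometric bookkeeping for $\widetilde C_n^{\pm}$: one must verify that, at every step, the intersection remains a genuine non-degenerate parallelogram with sides still parallel to the eigendirections, and that the contracting side shrinks by a factor of exactly $\lambda^{-1}$. The Markov property is precisely what prevents degeneracy, since it guarantees that $T(P_i) \cap P_j$, when non-empty, spans $P_j$ fully in the unstable direction. The uniform lower bound $c_0 \lambda^{-n}$ ultimately rests on the fact that only finitely many partition elements occur, so the infimum of all $s_j$ and $u_j$ is a positive constant.
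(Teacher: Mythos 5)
Your proposal is correct and takes essentially the same approach as the paper: the paper likewise computes $|C_n|$ exactly from the metric (recording $\lambda^{-(n+1)}$ rather than your $\lambda^{-n}$, an immaterial convention difference) and then asserts $c_{\min}\lambda^{-n} \leq |\widetilde{C}_n| \leq c_{\max}\lambda^{-n}$ directly from the finiteness of the Markov partition, where you instead spell out the inductive parallelogram/Markov-property argument in detail. Apart from a small slip of phrasing (the strip cut at step $n+1$ has stable width $\lambda^{-(n+1)}$ times, not $\lambda^{-1}$ times, the stable width of $P_{x_{n+1}}$, which does not affect your stated conclusion), there is nothing to correct.
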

\begin{proof}
Firstly we have that 
\[
|C_n| = \frac{1}{\lambda^{n+1}}.
\]
Now by the definition of the cylinder in $\mathbb{T}^2$ we have that 
\[ c_{\min} \ \lambda^{-n} \leq |\widetilde{C}_n| \leq c_{\max} \ \lambda^{-n}
\]
where $c_{\min}:= \min\{ |X_j|: \ X_j \text{ is an element of the Markov partition} \}$ and $c_{\max}$ is accordingly defined. Since this is a finite partition consisting of compact elements $0 < c_{\min}, c_{\max} < +\infty$.
\end{proof}

\begin{lemma} \label{Lemma comparability of balls and cylinders in 2D torus}
Let $\widetilde{C}_n$ be an $n$-level cylinder in $\mathbb{T}^2$. Then there exists a ball $B_n$ such that 
\[
\widetilde{C}_n \subset B_n \quad \text{ and } \quad |B_n|\leq 3 |\widetilde{C}_n| .
\]
On the other hand, there exist absolute constants $c'>0$ and $k_0 \in \mathbb{N}$ such that if $B(x, r)$ is a ball in $\mathbb{T}^2$, then one needs at most $k_0$ cylinders of level $n=n(r)$ so that
\[
B(x,r) \subset \bigcup_{i=1}^{k_{0}} \widetilde{C}_{n}^i \quad \text{ and } \quad | \widetilde{C}_{n}^i| \leq c' |B(x,r)|.
\] 
\end{lemma}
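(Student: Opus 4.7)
The first inclusion is essentially immediate. Given $\widetilde{C}_n$, pick any point $y \in \widetilde{C}_n$ and set $B_n$ to be the ball of radius $|\widetilde{C}_n|$ centred at $y$. Then $\widetilde{C}_n \subset B_n$ by definition of diameter, while $|B_n| = 2|\widetilde{C}_n| \leq 3|\widetilde{C}_n|$. The substance of the lemma lies in the second statement.

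My approach rests on a preliminary geometric claim: there exist constants $0 < c_1 \leq c_2 < \infty$, depending only on $A$ and the Markov partition, such that each $n$-level cylinder $\widetilde{C}_n$ is a parallelogram with sides parallel to the stable and unstable eigendirections of $A$, each of length in $[c_1 \lambda^{-n},\, c_2 \lambda^{-n}]$. I would verify this directly from the construction of $\widetilde{C}_n = \bigcap_{i=-n}^{n} T^{-i}(P_{j_i})$: by Snavely's theorem (and the hypothesis that the $P_j$ are parallelograms with sides in the eigendirections), each $T^{-i}(P_{j_i})$ is again a parallelogram with sides in those directions, with unstable extent $\sim \lambda^{-i}$ for $i > 0$ and stable extent $\sim \lambda^{-|i|}$ for $i < 0$. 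Intersecting over $|i|\leq n$, the unstable side of $\widetilde{C}_n$ is constrained to length $\sim \lambda^{-n}$ by the forward indices and the stable side to length $\sim \lambda^{-n}$ by the backward indices. The two-sided bound $[c_1 \lambda^{-n}, c_2 \lambda^{-n}]$ follows from the fact that Markov partition elements have side-lengths bounded above and below. Together with the fixed angle $\theta_0 \in (0, \pi)$ between the eigendirections, this gives $|\widetilde{C}_n| \sim \lambda^{-n}$.

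Granted this claim, the cover is built as follows. Given $B(x, r)$, pick $n = n(r)$ to be the unique integer with $\lambda^{-(n+1)} < r \leq \lambda^{-n}$. Every $n$-level cylinder has diameter at most $c_2' \lambda^{-n} \leq c_2' \lambda r$, which is bounded by $c' |B(x,r)|$ for $c' := c_2' \lambda / 2$. Let $\mathcal{F}$ be the family of $n$-level cylinders that meet $B(x,r)$; since the $\widetilde{C}_n$ tile $\mathbb{T}^2$ modulo boundaries, $\mathcal{F}$ covers $B(x,r)$. Every cylinder in $\mathcal{F}$ lies inside the enlarged ball $B' := B(x, r + c_2' \lambda^{-n})$, whose area is $O(r^2)$, while each cylinder has area at least $c_1^2 \sin \theta_0 \cdot \lambda^{-2n} \sim r^2$. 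A volume-counting argument then bounds $\#\mathcal{F}$ by an absolute constant $k_0$.

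The main obstacle is verifying the two-sided control $[c_1\lambda^{-n}, c_2\lambda^{-n}]$ on the cylinder side-lengths, and in particular the lower bound. The upper bound is immediate from hyperbolicity, but the lower bound requires checking that the recursive refinement does not degenerate; this uses both the Markov property (the image of an unstable side of $P_{j_i}$ under $T$ traverses $P_{j_{i+1}}$ fully in the unstable direction) and the uniform lower bound on the side-lengths of the finitely many $P_j$. Once this geometric claim is in place, the covering argument and the count are routine packing in the plane for parallelograms of bounded aspect ratio.
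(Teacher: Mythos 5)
Your proposal follows essentially the same route as the paper: choose the level $n$ with $\lambda^{-(n+1)} < r \leq \lambda^{-n}$, use that $n$-level cylinders are parallelograms with both side lengths comparable to $\lambda^{-n}$ and with pairwise disjoint interiors tiling $\mathbb{T}^2$, bound their diameters by $c'|B(x,r)|$, and count those meeting the ball by an absolute constant; the only cosmetic difference is that the paper counts cylinders along one direction and squares (using $b_{\min}$, $h_{\min}$), whereas you use an equivalent area/packing count in an enlarged ball. Your additional care in justifying the lower bound on cylinder side lengths via the Markov property is a point the paper simply asserts, so nothing is missing.
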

\begin{proof}
The first part is clear. All we have to do is consider a ball with centre one point in the cylinder and radius equal to the diameter of the cylinder.
Now for the second part, there exists an $n_0\in \mathbb{N}$ such that $\lambda^{-n_0-1} \leq r < \lambda^{-n_0}$. By Lemma~\ref{Lemma cylinders in shift space and in manifold are of comparable diameter}, we have that the $n_0$-level cylinders have diameter comparable to $\lambda^{-n_0}$, and in particular it is at most $c_{\max} \lambda^{-n_0}$ (see the proof of the same lemma). Also the diameter of the ball is $2r<2 \lambda^{-n_0}$. Therefore, for every $n_0$-level cylinder,
\[
|C_{n_0}| \ \leq \ 2 c_{\max} |B(x,r)|.
\]

Let us consider $b_{\min}$ to be the minimum length of all sides of all elements of the partition and $h_{\min}$ to be the minimum of all heights of all elements of the partition. 
\begin{figure} [H]
\begin{tikzpicture}
[declare function={a=3;b=1.2;alpha=65;}]    
\path 
(0,0) coordinate (A) +(alpha/2:.6) 
(a,0) coordinate (B)
(alpha:b) coordinate (D)
($(B)+(D)-(A)$) coordinate (C)
($(A)!(D)!(B)$) coordinate (H)
;
\draw[dashed] (D)--(H) node[midway,right]{$h$};
\draw[thick] (A)
--(B) node[midway,below]{$b$}
--(C)--(D)
--cycle node[midway,left]{};
;
\end{tikzpicture}
\end{figure}
Then the side and the height of an $n$-level cylinder is at least $\lambda^{-n}b_{\min}$ and $\lambda^{-n}h_{\min}$ respectively. Since no overlapping of the (interiors) occurs between cylinders of the same level, by considering the diameter that is parallel to the $h_{\min}$, we can see that the number of $n_0$-level cylinders needed to cover a distance equal to the diameter of the ball is at most
\[
\left\lceil \frac{2r}{h_{\min}\lambda^{-n_0}} \right\rceil + 1.
\]
Again, since there is no overlapping of (the interiors of) the cylinders of a given level and they cover the whole space and since $h_{\min} \leq b_{\min}$, the number of $n_0$-level cylinders covering the ball is at most 
\[
\biggl( \left\lceil \frac{2r}{h_{\min}\lambda^{-n_0}} \right\rceil + 1 \biggr)^2
\]
which is smaller than
\[
\biggl( \left\lceil\frac{2\lambda^{-n_0}}{h_{\min}\lambda^{-n_0}}
\right\rceil + 1 \biggr)^2 \ 
= \ 
\biggl( \left\lceil\frac{2}{h_{\min}} \right\rceil + 1 \biggr)^2.
\]
Now for $c'=2 c_{\max}$ and $k_0= \Bigl(
\left\lceil\frac{2}{h_{\min}} \right\rceil + 1 \Bigr)^2$ we have the assertion.
\end{proof}

A straightforward corollary of Lemma~\ref{Lemma comparability of balls and cylinders in 2D torus} is the following.

\begin{corollary} \label{Cor. cylinders are sufficient for the
    dimension in 2D torus} In $\mathbb{T}^2$ one needs to
  consider only covers of cylinders in order to calculate the
  Hausdorff dimension of any subset of\/ $\mathbb{T}^2$.
\end{corollary}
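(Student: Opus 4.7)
The plan is to derive the corollary as a direct consequence of Lemma~\ref{Lemma comparability of balls and cylinders in 2D torus} combined with Lemma~\ref{Lemma H. dimension with only considering open balls}. Fix $Y \subset \mathbb{T}^2$ and $s \geq 0$. By Lemma~\ref{Lemma H. dimension with only considering open balls}, the $s$-dimensional Hausdorff measure $\mathcal{H}^s(Y)$ can be computed by taking the infimum only over countable covers of $Y$ by open balls of arbitrarily small diameter. The idea is then to convert any such ball cover into a cover by cylinders whose $s$-dimensional cost is comparable, independently of the cover.

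More precisely, let $\delta > 0$ and suppose $\{B(x_i, r_i)\}_{i \in \mathbb{N}}$ is a cover of $Y$ by open balls with $2r_i < \delta$. Applying Lemma~\ref{Lemma comparability of balls and cylinders in 2D torus} to each $B(x_i, r_i)$, we obtain cylinders $\widetilde{C}_{n(r_i)}^{1}, \ldots, \widetilde{C}_{n(r_i)}^{k_i}$ with $k_i \leq k_0$ such that
\[
B(x_i, r_i) \subset \bigcup_{j=1}^{k_i} \widetilde{C}_{n(r_i)}^{j} \qquad \text{and} \qquad |\widetilde{C}_{n(r_i)}^{j}| \leq c'\, |B(x_i, r_i)|.
\]
The union $\bigcup_i \bigcup_{j=1}^{k_i} \widetilde{C}_{n(r_i)}^{j}$ is a cover of $Y$ by cylinders, each of diameter at most $c' \delta$, and
\[
\sum_i \sum_{j=1}^{k_i} |\widetilde{C}_{n(r_i)}^{j}|^s \ \leq \ k_0 (c')^s \sum_i |B(x_i, r_i)|^s.
\]
Denoting by $\mathcal{H}^s_\delta$ (respectively $\widetilde{\mathcal{H}}^s_\delta$) the usual $\delta$-approximating Hausdorff pre-measure (respectively the one computed using only covers by cylinders of diameter at most $\delta$), we obtain
\[
\widetilde{\mathcal{H}}^s_{c'\delta}(Y) \ \leq \ k_0 (c')^s\, \mathcal{H}^s_\delta(Y).
\]
Conversely, since every cylinder is itself contained in a ball of comparable diameter by the first part of Lemma~\ref{Lemma comparability of balls and cylinders in 2D torus}, we also have $\mathcal{H}^s_{3\delta}(Y) \leq 3^s\, \widetilde{\mathcal{H}}^s_\delta(Y)$.

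Letting $\delta \to 0$ in both inequalities yields $\widetilde{\mathcal{H}}^s(Y) \leq k_0 (c')^s\, \mathcal{H}^s(Y)$ and $\mathcal{H}^s(Y) \leq 3^s\, \widetilde{\mathcal{H}}^s(Y)$. The two measures therefore share the same critical exponent, so the Hausdorff dimension of $Y$ coincides with the value obtained by restricting to covers by cylinders. No real obstacle is expected; the only point requiring a small amount of care is that the cylinders produced for different balls typically belong to different levels $n(r_i)$, but this is harmless since the construction is carried out cover by cover.
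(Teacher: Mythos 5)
Your proposal is correct and is exactly the argument the paper has in mind: the corollary is stated as a straightforward consequence of Lemma~\ref{Lemma comparability of balls and cylinders in 2D torus} (together with Lemma~\ref{Lemma H. dimension with only considering open balls}), and you have simply written out the standard comparison of the $\delta$-pre-measures, converting ball covers to cylinder covers with bounded multiplicity and comparable diameters and vice versa. Since the constants $k_0$, $c'$ and $3$ are uniform, the critical exponents agree, which is precisely the paper's (unwritten) proof.
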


The combination of Lemma~\ref{Lemma cylinders in shift space and in manifold are of comparable diameter} and Corollary \ref{Cor. cylinders are sufficient for the dimension in 2D torus} gives Proposition \ref{Proposition dim of E in shift space  equals  dim of p(E) in manifold through coding map}.

As it was established in Proposition \ref{Proposition dim of E in shift space  equals  dim of p(E) in manifold through coding map}, it is sufficient to work with the set 
\[
\widehat{\mathcal{U}}(\alpha) \ 
:= \ 
\pi^{-1} \big( \mathcal{U}(\alpha)  \big) 
\ \subset \ 
\Sigma_{\Gamma}.
\]
In other words, we will show in reality that 
\[
\dim_{H}\big(\widehat{\mathcal{U}}(\alpha) \big)
=
\dim_{H} \big(\mathcal{U}'(\alpha) \big).
\]
We will proceed with a series of lemmas that, in the end, will provide the assertion.

\begin{lemma} \label{Lemma coding map is Lipschitz}
The coding map $\pi \colon \Sigma_{\Gamma} \to \mathbb{T}^{2}$, with the chosen metrics,
is $L$-Lipschitz for some $L>0$.  
\end{lemma}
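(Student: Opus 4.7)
The plan is to exploit the compatibility—by design—between the metric $d_\Sigma(\underline{x},\underline{y})=\lambda^{-k(\underline{x},\underline{y})}$ and the geometry of the Markov partition, as reflected in Lemma~\ref{Lemma cylinders in shift space and in manifold are of comparable diameter}. Given two distinct points $\underline{x},\underline{y}\in\Sigma_\Gamma$, set $k=k(\underline{x},\underline{y})$, so that $d_\Sigma(\underline{x},\underline{y})=\lambda^{-k}$ and $x_i=y_i$ for all $|i|\leq k$. I will then observe that both images $\pi(\underline{x})$ and $\pi(\underline{y})$ must lie in the same symmetric $k$-level cylinder
\[
\widetilde{C}_k=\bigcap_{i=-k}^{k}T^{i}(P_{x_i}),
\]
simply because the coding map sends $\underline{x}$ (resp.\ $\underline{y}$) into $T^i(P_{x_i})$ (resp.\ $T^i(P_{y_i})$) for every $i\in\mathbb{Z}$, and these coincide for $|i|\leq k$.

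Next I will invoke Lemma~\ref{Lemma cylinders in shift space and in manifold are of comparable diameter}, which yields an absolute constant $c>0$ (independent of $k$) such that $|\widetilde{C}_k|\leq c\lambda^{-k}$. Since $\pi(\underline{x})$ and $\pi(\underline{y})$ both belong to $\widetilde{C}_k$, this gives
\[
d_{\mathbb{T}^2}\bigl(\pi(\underline{x}),\pi(\underline{y})\bigr)\leq |\widetilde{C}_k|\leq c\lambda^{-k}=c\,d_\Sigma(\underline{x},\underline{y}),
\]
so that $\pi$ is Lipschitz with constant $L=c$.

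There is no real obstacle—the argument is essentially a bookkeeping check that the chosen metric on $\Sigma$ encodes exactly the geometric contraction rate of the cylinders in the manifold, which is precisely what was anticipated in the remark preceding Proposition~\ref{Proposition sufficient to calculate dim for the respective set in the shift space}. The only point worth emphasizing in the write-up is that the bound $|\widetilde{C}_k|\leq c\lambda^{-k}$ is uniform in $k$, coming from the finiteness of the Markov partition and the fact that $T$ contracts the stable direction by $\lambda^{-1}$ and expands the unstable direction by $\lambda$, so that iterating $k$ times on each side produces a parallelogram of both sides of order $\lambda^{-k}$ up to multiplicative constants determined by $\max_j|P_j|$.
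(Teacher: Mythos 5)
Your proof is correct and follows essentially the same route as the paper: both arguments note that $d_\Sigma(\underline{x},\underline{y})\leq\lambda^{-k}$ forces $\pi(\underline{x}),\pi(\underline{y})$ into the same symmetric $k$-level cylinder $\widetilde{C}_k$, whose diameter is bounded by a constant times $\lambda^{-k}$ thanks to Lemma~\ref{Lemma cylinders in shift space and in manifold are of comparable diameter}. Your version is marginally cleaner in working directly with $k(\underline{x},\underline{y})$ instead of introducing an auxiliary $\delta$ and $m$, but the substance is identical.
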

\begin{proof}
Let $\delta>0$ and $\underline{x}, \underline{y} \in \Sigma_{\Gamma}$. Assume that
\[
d_{\Sigma}(\underline{x}, \underline{y}) < \delta.
\]
There exists an $m>0$ such that 
\[
\lambda^{-m-1} \ < \ \delta \ \leq \ \lambda^{-m}
\]
which implies that $\underline{x}, \underline{y} \in C_{m}$. 
Therefore, $x,y \in \widetilde{C}_m$, where $x= \pi(\underline{x})$ and $y= \pi(\underline{y})$. Hence 
\[
d_{\mathbb{T}^2} (x,y) \leq c_{\max} \lambda^{-m}
\]
where, as before, $c_{\max}:= \max\{ |X_j|: \ X_j \text{ is an element of the Markov partition} \}$. 
For $L=c_{\max}$ we have the assertion.
\end{proof}

Observe that by changing in our covering arguments the length of the fixed blocks in the proof of~\eqref{Equation I: lower bound} or in the proof of~\eqref{Equation II: lower bound} by a constant quantity, it does not affect the dimension whatsoever. Therefore we get the following result essentially for free.
\begin{lemma} \label{Lemma constant in ell_N doesn't change the dimension}
Let $c>0$ be a constant and consider the set
\[
\mathcal{U}_{c}'(\alpha) = \{\underline{x}\in \Sigma_{\Gamma}: \ \exists M=M(\underline{x}) \geq 1 \text{ such that } \forall N\geq M, \ \exists n\leq N \text{ such that } d_{\Sigma}(\sigma^n\underline{x}, \underline{x}) \leq c \cdot\lambda^{-\alpha N}\}.
 \]
Then
\[
\dim_{H} \big( \mathcal{U}'(\alpha) \big) = \dim_{H} \big( \mathcal{U}_{c}'(\alpha) \big).
\]
\end{lemma}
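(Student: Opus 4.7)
The plan is to sandwich $\mathcal{U}_c'(\alpha)$ between $\mathcal{U}'(\alpha+\epsilon)$ and $\mathcal{U}'(\alpha-\epsilon)$ and then invoke continuity of the dimension function $\alpha \mapsto \dim_H \mathcal{U}'(\alpha)$ already pinned down by~\eqref{Equation I: lower bound} and~\eqref{Equation II: lower bound}. Translating the multiplicative constant into the shift metric, the condition $d_\Sigma(\sigma^n \underline{x}, \underline{x}) \leq c\lambda^{-\alpha N}$ is equivalent to $k(\sigma^n \underline{x}, \underline{x}) \geq \alpha N - \log_\lambda c$, which differs from the condition defining $\mathcal{U}'(\alpha)$ by a constant additive shift in the required fixed-block half-length. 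For any $\epsilon>0$ there exists $N_0 = N_0(c,\epsilon,\lambda)$ such that
\[
\lambda^{-(\alpha+\epsilon) N} \;\leq\; c\lambda^{-\alpha N} \;\leq\; \lambda^{-(\alpha-\epsilon) N} \qquad \text{for all } N \geq N_0,
\]
simply because the additive shift $\log_\lambda c$ is fixed while the gap $\epsilon N$ grows linearly.

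From these inequalities one reads off the two inclusions directly: if $\underline{x}\in\mathcal{U}'(\alpha+\epsilon)$ with witness $M(\underline{x})$, then the same $\underline{x}$, with threshold $\max(M(\underline{x}),N_0)$, witnesses $\underline{x}\in\mathcal{U}_c'(\alpha)$, and symmetrically $\mathcal{U}_c'(\alpha)\subseteq\mathcal{U}'(\alpha-\epsilon)$. Monotonicity of Hausdorff dimension therefore gives
\[
\dim_H \mathcal{U}'(\alpha+\epsilon) \;\leq\; \dim_H \mathcal{U}_c'(\alpha) \;\leq\; \dim_H \mathcal{U}'(\alpha-\epsilon).
\]
The piecewise formula for $\dim_H\mathcal{U}'(\alpha)$ proved in Sections~\ref{Section lower-isomorphism} and~\ref{Section upper-isomorphism} is continuous on $[0,\infty)$, as a direct check at the three transition points $\alpha = 3-2\sqrt{2}$, $\alpha = 2-\sqrt{3}$, and $\alpha = 1/3$ shows. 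Sending $\epsilon \to 0$ in the sandwich yields the claimed equality.

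There is essentially no obstacle in this route; the only thing one has to notice is that enlarging the existential threshold $M$ only restricts the universal quantifier, so the witness property is preserved. As the authors hint, an alternative direct route would re-run the covering constructions of Sections~\ref{Section lower-isomorphism} and~\ref{Section upper-isomorphism} with fixed-block half-length $\alpha n_{k+1} - \log_\lambda c$ in place of $\alpha n_{k+1}$. In the lower bound the measure $\mu$ picks up an additive $O(1)$ in each cylinder exponent, which disappears in the ratio $\log\mu(C_m)/\log\lambda^{-m}$ as $m\to\infty$; in the upper bound the free-digit counts $P(n)$ and $Q(n)$ shift by an additive $O(1)$, contributing only a multiplicative $\lambda^{O(1)}$ factor to the cardinality of the cylinder cover and therefore leaving the critical exponent $s_0$ unchanged. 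The sandwich argument sidesteps this bookkeeping and also shows, as a byproduct, that the analogous statement with $c \lambda^{-\alpha N}$ replaced by any sequence $\varepsilon_N \lambda^{-\alpha N}$ with $\log\varepsilon_N = o(N)$ is true as well.
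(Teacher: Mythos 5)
Your proof is correct, but it proceeds differently from the paper. The paper treats this lemma as a corollary of the \emph{proofs} of~\eqref{Equation I: lower bound} and~\eqref{Equation II: lower bound}: writing $c\lambda^{-\alpha N}=\lambda^{-\alpha N+\log_\lambda c}$, the constant only shifts the lengths of the fixed blocks by an $O(1)$ amount, which changes neither the local dimension of the measure $\mu$ in the lower bound nor the counts $P(n)$, $Q(n)$ and the critical exponent $s_0$ in the covering argument --- exactly the ``bookkeeping'' route you sketch at the end. Your main argument instead sandwiches $\mathcal{U}'(\alpha+\epsilon)\subseteq\mathcal{U}_c'(\alpha)\subseteq\mathcal{U}'(\alpha-\epsilon)$ (the inclusions are fine: the additive shift $\log_\lambda c$ is eventually dominated by $\epsilon N$, and enlarging the threshold $M$ is harmless) and then lets $\epsilon\to 0$ using continuity of the explicit formula from Sections~\ref{Section lower-isomorphism} and~\ref{Section upper-isomorphism}; this is not circular, since the lemma is only invoked afterwards, in Section~\ref{Section dim manifold = dim coding space - isomorphism}, and the values at the transition points $3-2\sqrt2$, $2-\sqrt3$, $1/3$ do match. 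What the sandwich buys is brevity, independence from the internal structure of the covering constructions, and the stronger byproduct you note (any radius $\varepsilon_N\lambda^{-\alpha N}$ with $\log\varepsilon_N=o(N)$); what it costs is that it leans on knowing the dimension function explicitly and on its continuity in $\alpha$, whereas the paper's route would survive even if the formula had jumps. Two small points to tidy up: for $\alpha=0$ (or $\epsilon>\alpha$) the upper inclusion involves a negative exponent, so either restrict to $\epsilon<\alpha$ and handle $\alpha=0$ by the trivial bound $\dim_H\mathcal{U}_c'(0)\leq\dim_H\Sigma_\Gamma=2$, which matches the formula, or say explicitly that $\mathcal{U}'(\beta)$ with $\beta\leq 0$ is all of $\Sigma_\Gamma$; and for $\alpha\geq 1/3$ the sandwich still works since the formula is identically $0$ there and continuous from the left at $1/3$.
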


\begin{corollary} \label{Cor. U hat  >  U'}
\[
\dim_{H}\big(\widehat{\mathcal{U}}(\alpha) \big)
\geq
\dim_{H} \big(\mathcal{U}'(\alpha) \big).
\]
\end{corollary}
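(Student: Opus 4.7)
The approach combines Lemma~\ref{Lemma coding map is Lipschitz} (the coding map $\pi$ is $L$-Lipschitz), Lemma~\ref{Lemma constant in ell_N doesn't change the dimension} (multiplicative constants in the threshold do not affect the Hausdorff dimension of the symbolic uniform recurrence set), and the semi-conjugacy $\pi \circ \sigma = T \circ \pi$. Since we wish to land in $\mathcal{U}(\alpha)$---with constant $1$---after applying the Lipschitz map, the idea is to start from the tighter symbolic set $\mathcal{U}_{1/L}'(\alpha)$: the extra factor $1/L$ compensates exactly for the Lipschitz constant, and by Lemma~\ref{Lemma constant in ell_N doesn't change the dimension} it is dimensionally free.

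First, fix $L > 0$ as in Lemma~\ref{Lemma coding map is Lipschitz}. The key inclusion is $\mathcal{U}_{1/L}'(\alpha) \subseteq \widehat{\mathcal{U}}(\alpha)$. Indeed, let $\underline{x} \in \mathcal{U}_{1/L}'(\alpha)$; by definition there exists $M = M(\underline{x})$ such that, for every $N \geq M$, one finds some $n \leq N$ with $d_\Sigma(\sigma^n \underline{x}, \underline{x}) \leq (1/L)\lambda^{-\alpha N}$. Using the semi-conjugacy $\pi \circ \sigma^n = T^n \circ \pi$ together with the Lipschitz bound,
\[
d_{\mathbb{T}^2}\bigl(T^n \pi(\underline{x}), \pi(\underline{x})\bigr)
= d_{\mathbb{T}^2}\bigl(\pi(\sigma^n \underline{x}), \pi(\underline{x})\bigr)
\leq L \cdot d_\Sigma(\sigma^n \underline{x}, \underline{x})
\leq \lambda^{-\alpha N}.
\]
Therefore $\pi(\underline{x}) \in \mathcal{U}(\alpha)$, i.e.\ $\underline{x} \in \pi^{-1}(\mathcal{U}(\alpha)) = \widehat{\mathcal{U}}(\alpha)$.

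Monotonicity of Hausdorff dimension then gives $\dim_{H}\bigl(\mathcal{U}_{1/L}'(\alpha)\bigr) \leq \dim_{H}\bigl(\widehat{\mathcal{U}}(\alpha)\bigr)$, and Lemma~\ref{Lemma constant in ell_N doesn't change the dimension} (applied with $c = 1/L$) yields $\dim_{H}(\mathcal{U}'(\alpha)) = \dim_{H}(\mathcal{U}_{1/L}'(\alpha))$; combining the two inequalities delivers the claim. No step is substantively hard, as the Lipschitz property of $\pi$, the semi-conjugacy, and the constant-absorption lemma are already in hand; the only point requiring any care is to use the constant on the smaller (symbolic) side---namely $c = 1/L$ rather than $c = L$---so that the Lipschitz factor is absorbed by the threshold rather than amplified.
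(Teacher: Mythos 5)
Your proposal is correct and follows exactly the paper's argument: the inclusion $\mathcal{U}_{L^{-1}}'(\alpha) \subset \widehat{\mathcal{U}}(\alpha)$ via the Lipschitz property of $\pi$ (with the semi-conjugacy spelled out, which the paper leaves implicit), followed by monotonicity of Hausdorff dimension and Lemma~\ref{Lemma constant in ell_N doesn't change the dimension} with $c = L^{-1}$. Nothing further is needed.
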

\begin{proof}
Since, $\pi$ is $L$-Lipschitz (Lemma~\ref{Lemma coding map is Lipschitz}), we have 
\[
\widehat{\mathcal{U}}(\alpha) 
\supset
\mathcal{U}_{L^{-1}}'(\alpha).
\]
By Lemma~\ref{Lemma constant in ell_N doesn't change the dimension} we get the result.
\end{proof}

Now the more interesting part is to show the other inequality. By looking a little bit more carefully at the structure of the set $\widehat{\mathcal{U}}(\alpha)$ and the clear commonalities with its counterpart $\mathcal{U}'(\alpha)$, one can show that a cover can be constructed for the $\widehat{\mathcal{U}}(\alpha)$, following the method of Bugeaud and Liao. The only difference is that for each "fixed block", we now have $K_0$ many blocks that we could choose from \big(instead of just one, as for the cover of $\mathcal{U}'(\alpha)$\big), where $K_0>0$ is an absolute constant but this does not affect the dimension. We omit a big portion of the details for the construction of the cover, as they have been already clarified in the proof of~\eqref{Equation II: lower bound}.
\begin{lemma} \label{Lemma dim (Uhat)  <  dimU'}
\[
\dim_{H}\big( \widehat{\mathcal{U}}(\alpha) \big) \ \leq \ \dim_{H} \big( \mathcal{U}'(\alpha) \big).
\]
\end{lemma}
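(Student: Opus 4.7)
The approach is to reproduce the cover construction from the proof of~\eqref{Equation II: lower bound} for $\widehat{\mathcal{U}}(\alpha)$ in place of $\mathcal{U}'(\alpha)$, translating the torus recurrence condition into symbolic data via the comparability between balls and cylinders established in Lemma~\ref{Lemma cylinders in shift space and in manifold are of comparable diameter} and Lemma~\ref{Lemma comparability of balls and cylinders in 2D torus}.

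For $\underline{x}\in\widehat{\mathcal{U}}(\alpha)$, I would first build the sequence of centres $(n_k)=(n_k(\underline{x}))$ and associated lengths $l_k$ exactly as in Section~\ref{Section upper-isomorphism}, except that $n_N'$ is selected to minimise $d_{\mathbb{T}^2}(T^n\pi(\underline{x}),\pi(\underline{x}))$ over $0\leq n\leq N$. The geometric content of the recurrence is then that for each $k$, the points $\pi(\underline{x})$ and $T^{n_k}\pi(\underline{x})=\pi(\sigma^{n_k}\underline{x})$ lie in the ball $B(\pi(\underline{x}),\lambda^{-\alpha n_{k+1}})$, which by Lemma~\ref{Lemma comparability of balls and cylinders in 2D torus} is covered by at most $k_0$ cylinders of level $m_k\approx \alpha n_{k+1}$. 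Consequently, the two words
\[
(x_{-m_k},\ldots,x_{m_k})\quad\text{and}\quad(x_{n_k-m_k},\ldots,x_{n_k+m_k})
\]
each belong to a set of size at most $k_0$, giving at most $k_0^2$ admissible ``fixed-block data'' per stage $k$, in contrast to the single possibility that arose in the cover of $\mathcal{U}'(\alpha)$.

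With this dictionary in place, I would carry out Cases~1 and~2 of the earlier proof verbatim: the same decomposition $\widehat{\mathcal{U}}(\alpha)\subset\bigcup_j\widehat{\mathcal{U}}_j(\alpha)$ indexed by $\theta(\underline{x})\in[\theta_j,\theta_j+\varepsilon]$, the same collections $V(n,m,\sigma)$, and covers by cylinders $C_K(\underline{y})$ of the same radii $\lambda^{-K}$. The only change is a multiplicative factor of at most $k_0^{2k_i}$ in the number of cylinders required for $V(n,m,\sigma)$, to account for the $k_0^2$ choices at each of the $k_i$ fixed blocks. Since~\eqref{eq:upper:ki:upper} gives $k_i\leq C_2\log n$, this factor is bounded by $k_0^{2C_2\log n}=n^{2C_2\log k_0}$, a purely polynomial term in $n$, dominated by the quasi-polynomial $n^{C_2\log n}$ already present in~\eqref{eq:upper:number} and~\eqref{eq:upper:number2}.

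Hence, in both Case~1 and Case~2, the new polynomial prefactor is absorbed without affecting the critical exponent: $s_0$ in~\eqref{eq:upper:right:s} and its analogue remain unchanged, so the upper bounds~\eqref{eq:upper:right:dim} and~\eqref{eq:upper:left:dim} transfer to $\widehat{\mathcal{U}}(\alpha)$ and yield $\dim_H(\widehat{\mathcal{U}}(\alpha))\leq \dim_H(\mathcal{U}'(\alpha))$. The step deserving the most care is verifying that the sequence $(n_k)$ extracted using the torus metric still satisfies the structural estimates~\eqref{eq:upper:length:k}--\eqref{eq:upper:midpoint:k}; this follows because Lemma~\ref{Lemma cylinders in shift space and in manifold are of comparable diameter} guarantees, uniformly in the level, that a ball of radius $\lambda^{-\alpha n_{k+1}}$ in $\mathbb{T}^2$ corresponds (up to an absolute constant number of cylinders) to a symmetric cylinder of level $\lceil \alpha n_{k+1}\rceil+O(1)$ in $\Sigma_\Gamma$, so the lengths $l_k$ governed by the torus recurrence are comparable to those arising from the shift recurrence up to additive constants that do not affect the asymptotics.
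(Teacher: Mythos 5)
Your proposal is correct and follows essentially the same route as the paper: translate the torus recurrence into a bounded number (an absolute constant, $K_0$ or $k_0^2$) of admissible symbolic fixed-block choices per stage via the ball--cylinder comparability, then rerun the covering argument of the upper bound, noting that the resulting extra factor is at most polynomial in $n$ (since the number of blocks is $O(\log n)$) and hence leaves the critical exponent $s_0$ unchanged.
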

\begin{remark}
We deliberately omit the Lipschitz constant $L$---basically treating it as if it could be chosen to be $1$---for convenience, in the proof of Lemma~\ref{Lemma dim (Uhat)  <  dimU'}. This does not alter the results nor the ideas and methods of the proof, while making the computations slightly simpler and the ideas more clear. Besides, we can actually choose $L$ to be less or equal to $1$ since we can choose a partition of arbitrarily small diameter, see for example \cite[Chapter 18]{Katok Hasselblatt}. Since $L:=c_{\max} :=\max\{ \text{diameters of all elements of the partition} \}$ we have that $L$ can indeed be chosen to be less or equal to $1$. Of course we have by Proposition \ref{Poposition entropy of SFT} and Corollary \ref{Corollary admissible blocks are almost lambda to the n} that, the largest eigenvalue of the corresponding transition matrix is still $\lambda$.
\end{remark}
\begin{proof}
Consider a point $\underline{x} \in \widehat{\mathcal{U}}(\alpha)$. This implies that for all large $N$, there exists an $n\in \{1, \ldots , N \}$ so that,
\begin{equation} \tag{a} \label{eq.  < lamba_N}
d_{\mathbb{T}^2} \big( T^n (\pi(\underline{x})) , \pi(\underline{x}) \big)
<
\lambda^{-\alpha N}.
\end{equation} 

Let $\xi:= \lfloor \alpha N \rfloor +  1 \in \mathbb{N}$ and consider the the $\xi$-th level cylinder $\widetilde{C}_{\xi}$ which contains the point $x:=\pi(\underline{x})$. By the relation \eqref{eq.  < lamba_N}, either $A^nx$ is contained in $\widetilde{C}_{\xi}$, or in a $\lambda^{-\alpha N}$-region of $\widetilde{C}_{\xi}$, which we denote as $\Delta_{\xi}$. A similar argument as in the proof of Lemma~\ref{Lemma comparability of balls and cylinders in 2D torus} gives us that there exists a an absolute constant (i.e.\ independent from $N$) $K_0\in \{1,2, \ldots \}$ such that, at most $K_0$ cylinders of level $\xi$ are needed in order to cover $\Delta_{\xi}$, i.e.\ to describe the relation \eqref{eq.  < lamba_N}.

In the same way as in the proof of~\eqref{Equation II: lower bound}, we construct a a cover for $\widehat{\mathcal{U}}(\alpha)$, but now we have to take also into account the $K_0$ choices that we have for each of the "fixed" blocks. More precisely, for the appropriate $k$'s so that $k\sim \log n$, where $n$ corresponds to the centre of the $k$-th fixed block, we have $K_0^{k} \sim K_0^{\log n}$ many choices if we have $k$ many "fixed" blocks.
Therefore we need for the covering argument, in analogy of \eqref{eq:upper:number} and \eqref{eq:upper:number2} in the proof of~\eqref{Equation II: lower bound},
\[
C_2 \log n \cdot n^{C_2 \log n} \cdot \lambda^{P(n)} \cdot K_0^{\log n}
\]
many cylinders in the first case and 
\[
C_2 \log n \cdot n^{C_2 \log n} \cdot \lambda^{Q(n)}\cdot K_0^{\log n}
\]
many cylinders in the second case.
This does not alter the critical value for neither of the two cases and therefore we get the same upper bounds for $\dim_{H}\big( \widehat{\mathcal{U}}(\alpha) \big)$ as we did for $ \dim_{H} \big( \mathcal{U}'(\alpha) \big)$. In particular
\[
\dim_{H}\big( \widehat{\mathcal{U}}(\alpha) \big) \ \leq \ \dim_{H} \big( \mathcal{U}'(\alpha) \big).
\qedhere
\]
\end{proof}

\end{document}